\numberwithin{equation}{section}
\theoremstyle{plain}
\newcommand{\ind}[1]{\mathbbm{1}_{\{#1\}}}   
\newcommand{\uei}{\underline{\epsilon}^k}
\newcommand{\oei}{\bar{\epsilon}^k}
\newcommand{\ei}{\epsilon^k}
\newcommand{\uli}{\underline{\Lambda}^k}
\newcommand{\oli}{\bar{\Lambda}^k}
\newcommand{\li}{\Lambda^k}
\newcommand{\oDi}{\bar{\Delta}^k}
\newcommand{\uDi}{\underline{\Delta}^k}
\newcommand{\Di}{\Delta^k}
\newcommand{\D}{\Delta}
\newcommand{\uRi}{\underline{R}^k}
\newcommand{\oRi}{\overline{R}^k}
\newcommand{\cF}{\mathscr{F}}
\newcommand{\cT}{\mathcal{T}}
\newcommand{\cN}{\mathcal{N}}
\newcommand{\cM}{\mathcal{M}}
\newcommand{\cS}{\mathcal{S}}
\newcommand{\cJ}{\mathcal{J}}
\newcommand{\cR}{\mathcal{R}}
\newcommand{\cC}{\mathcal{C}}
\newcommand{\ccL}{\mathcal{L}}
\newcommand{\Exp}{\mathsf{E}}
\newcommand{\Pro}{\mathsf{P}}
\newcommand{\cFt}{ {\cF}_{t}}
\newcommand{\cus}{ \cS} 
\newcommand{\dcus}{\tilde{\cus}}
\newcommand{\calo}{\mathcal{O}}
\newtheorem{theorem}{Theorem}
\newtheorem{lemma}{Lemma}
\newcommand{\ignore}[1]{{}}
\begin{document}

\begin{frontmatter}
\title{Bandwidth and Energy Efficient Decentralized Sequential Change Detection\thanksref{T1}}
\runtitle{Decentralized Sequential Change Detection}
\thankstext{T1}{This work was supported in part by the US National Science Foundation
under Grant CIF1064575.}

\begin{aug}
\author{\fnms{Georgios} \snm{Fellouris}\thanksref{a,e1}\ead[label=e1,mark]{fellouri@illinois.edu}}
\and
\author{\fnms{George V.} \snm{Moustakides}\thanksref{b,e2}\ead[label=e2,mark]{moustaki@upatras.gr}}

\address[a]{Department of  Statistics, University  of Illinois, Urbana-Champaign, IL 61820, USA.
\printead{e1}}

\address[b]{Department of Electrical and  Computer Engineering, University of Patras, 26500 Rion, Greece.\\
\printead{e2}}

\runauthor{Fellouris and Moustakides}
\affiliation{University of Illinois  and  University of Patras}

\end{aug}

\begin{abstract}
The problem of decentralized sequential change detection is considered, where an abrupt change occurs in an area monitored by a number of sensors;
the sensors transmit their data to  a fusion center, subject to bandwidth and energy constraints, and  the fusion center is responsible 
for detecting the change as soon as possible. A novel sequential detection rule is proposed that requires communication from the sensors at random times and transmission of only low-bit messages, on which the fusion center runs in parallel a CUSUM test. The second-order asymptotic optimality of the proposed scheme is established  both in discrete and in continuous time. Specifically, it is shown that the inflicted performance loss (with respect to the optimal detection rule that uses the complete sensor observations) is asymptotically bounded as the rate of false alarms goes to 0, for any fixed rate of communication.
When the rate of communication from the sensors is asymptotically low, the proposed scheme remains first-order asymptotically optimal. Finally, simulation experiments illustrate its efficiency and its superiority over a decentralized detection rule that relies on communication at deterministic times. 
\end{abstract}

\begin{keyword}[class=AMS]
\kwd[Primary]{62L10}
\kwd{60G40}
\end{keyword}

\begin{keyword}
             \kwd{CUSUM}
             \kwd{Change-point detection}
             \kwd{Sequential change detection}
             \kwd{Decentralized detection}
             \kwd{Communication constraints}
             \kwd{Quantization}
             \kwd{Random sampling}
             \kwd{Asymptotic optimality}
\end{keyword}

\end{frontmatter}

\section{Introduction}
Suppose that an area is being monitored by a number of sensors which transmit their observations to a central location, that we will call fusion center. 
At some unknown time, an abrupt disorder occurs, such as an unexpected intrusion, and changes the dynamics of the observed processes in all sensors simultaneously. The goal is to raise an alarm at the fusion center  as soon as possible after the occurrence of the change.
When the sensors transmit their complete observations to the fusion center, this is the classical problem of sequential change detection, for exhaustive reviews on which we refer to  \cite{niki}, \cite{poor},  \cite{lairoyal}, \cite{shir}, \cite{polu}. However, classical detection rules typically are not applicable in modern application areas, such as mobile and wireless communications and distributed surveillance systems. In such systems, the sensors are typically low-power devices whose links with the fusion center are characterized by limited communication bandwidth \cite{vij},\cite{veera}. Thus, in order to preserve the robustness of the network, it is necessary to limit the overall communication load and, in particular, the transmission activity of each sensor. This primarily implies a \textit{quantization} constraint, i.e., each sensor should  transmit a small number of bits each time it communicates with the fusion center, but also a \textit{rate} constraint, i.e., each sensor should communicate with the fusion center at a lower rate than its sampling rate. As a result, 
before constructing a sequential detection rule at the fusion center, the designer must first decide what kind of information  should be transmitted from the sensors, taking into account the above communication constraints. In what follows, we will call detection rules  that respect such constraints \textit{decentralized}, in contrast to the \textit{centralized} ones that require knowledge  of the 
full sensor observations. 

Most papers in the decentralized literature (see, e.g., \cite{crow}, \cite{veer}, \cite{veera}, \cite{tart}) assume that each sensor transmits a quantized version of \textit{every} observation it takes, i.e., the communication rate is equal to the sampling rate.  For a discussion on one-shot schemes, where each sensor transmits to the fusion center a single bit \textit{at most once}, we refer to \cite{moustdec}. A decentralized detection rule which enjoys an asymptotic optimality property was proposed by Mei \cite{mei}, however the performance of this scheme in practice is often worse than that of asymptotically suboptimal detection rules. Thus, it has been an open problem to find an asymptotically optimal decentralized detection rule that is also efficient in practice. 

The main contribution of this work is that we propose such a rule.  Specifically, we suggest that each sensor communicates with the fusion center at  stopping times of its local filtration; at every communication, it transmits a  \textit{low-bit} message which ``summarizes'' the evolution of its local sufficient statistic since the previous communication; the fusion center, in parallel, runs a CUSUM test on the transmitted messages in order to detect the change. 
For similar communication schemes in the context of decentralized sequential hypothesis testing we refer to \cite{fel} and \cite{yil}. The design and analysis of the proposed scheme, that we call D-CUSUM, is different in discrete and continuous time. However, in both cases we establish a \textit{second-order} asymptotic optimality property, that is stronger than the first-order asymptotic optimality of the detection rule in \cite{mei}. In particular, we show that the performance loss of D-CUSUM with respect to the optimal centralized CUSUM remains bounded as the period of false alarms goes to infinity. Moreover, we show that D-CUSUM remains first-order  asymptotically optimal even when it induces an asymptotically low communication rate and there is an asymptotically  large number of sensors. Simulation experiments suggest that these strong theoretical properties are also accompanied by  very good performance in practice and that D-CUSUM is much more efficient than a similar, CUSUM-based decentralized detection rule that relies on communication from the sensors  at deterministic times. 

In what follows, in Section 2, we formulate the problem of (decentralized) sequential change detection and  describe the main decentralized schemes in the literature. In Section 3, we define and analyze the proposed scheme both in continuous and in discrete time. In Section 4, we summarize and discuss an extension in the case of correlated sensors. The proof of all results,  as well as some supporting lemmas, are presented in Appendices A-E.

\section{Sequential Change Detection}
Let $\{(\xi_{t}:=\xi_{t}^{1},\ldots, \xi_{t}^{K})\}$  be a $K$-dimensional stochastic process, where $\xi_{0}^{k}:=0$ and $\xi^{k}$ is the observed process at sensor $k$, $1 \leq k \leq K$. We denote by $\{\cFt^{k}\}$  the local filtration at sensor $k$ and by  $\{\cFt\}$  the global filtration, i.e., $\cFt^{k}:= \sigma(\xi_{s}^k, \, 0  \leq s \leq t)$  and  $\cF_{t}:= \vee_{k} \cF_{t}^{k}$. Time may be either discrete $(t \in \mathbb{N})$ or continuous $(t \in [0, \infty))$ and in the latter case all filtrations are considered to be right-continuous. We assume that at  some unknown, deterministic time $\tau \geq 0$, the distribution of $\xi$, which we denote by $\Pro_\tau$, changes from $\Pro_{\infty}$ to $\Pro_{0}$, where $\Pro_{0}$ and $\Pro_{\infty}$ are two completely specified, locally equivalent  probability measures on the canonical space of  $\xi$. In other words,  $\Pro_\tau$ coincides with $\Pro_{\infty}$ when both measures are restricted to $\cFt$ and $t \leq \tau$, whereas for  $t> \tau$ we can define the following log-likelihood ratio process
$$
u_{t}-u_{\tau}:=\log \frac{\text{d} \Pro_{\tau}}{\text{d} \Pro_{\infty}} \Big|_{\cFt}  , \quad t \geq \tau; \quad u_{0}:=0.                 
$$

\subsection{The centralized setup}
In the centralized setup,  where the fusion center has access to all sensor observations, the problem is to find an $\{\cFt\}$-stopping time $\cT$ 
that has  small detection delay and rare false alarms, i.e.,  $\cT$ should take large values under $\Pro_{\infty}$ and $\cT-\tau$ small values under $\Pro_{\tau}$. There are different approaches in how to quantify  detection delay and false alarms, such as the Bayesian formulation due to Shiryaev \cite{shiry} (see also \cite{bei2}, \cite{pes}, \cite{gap}, \cite{savas}, \cite{sezer}) or the minimax formulation due to Pollak \cite{poll} (see also \cite{polu}, \cite{tpp}). In this work, we focus on the formulation suggested by Lorden \cite{lord}, where the performance of a detection rule $\cT$ is measured by its worst-case (with respect to $\tau$)  conditional expected delay given the worst possible history of observations up to $\tau$,
\begin{equation} \label{lord}
\cJ_{\text{L}}[\cT] =  \sup_{\tau \geq 0} \, \text{ess\,sup} \; \Exp_{\tau} [  (\cT-\tau)^{+}   | \cF_{\tau} ],
\end{equation}
and an optimal detection rule is a solution to the following optimization problem
\begin{equation} \label{lord_crit}
\inf_{\cT} \cJ_{\text{L}}[\cT] ~ \text{when} ~  \Exp_{\infty} [\cT] \geq \gamma ,
\end{equation} 
where $\gamma>0$. In other words,  the goal in this approach is to minimize the detection delay under the worst-case scenario 
with respect to both the changepoint and the history of observations before the change, 
while controlling the period of false alarms above  a desired level, $\gamma$. It is well known  (see \cite{moust}, \cite{moustext}) 
that when $\{u_{t}\}_{t \in \mathbb{N}}$ is a random walk, the solution to this problem  is given by Page's \cite{page} Cumulative Sums (CUSUM) test,
\begin{equation}  \label{cusum}
\cus:= \inf \{ t \geq 0: y_{t}  \geq \nu \} , ~ \text{where} \quad  y_{t}:= u_{t} - \inf_{0 \leq s < t} u_{s},
\end{equation}
and $\nu$ is defined so that the false alarm constraint in (\ref{lord_crit}) be satisfied with equality, i.e., $\Exp_{\infty} [\cus]=\gamma$. This exact (i.e., non-asymptotic) optimality of the CUSUM test can be extended to a much richer class of dynamics if we adopt an idea of Liptser and Shiryaev \cite{lip} and measure detection delay and period of false alarms not in terms of actual time, but in terms of  Kullback-Leibler divergence. Indeed, working similarly to \cite{moustito}, we replace the performance measure $\cJ_{L}$ by 
\begin{equation} \label{mod}
\cJ[\cT] :=  \sup_{\tau \geq 0} \, \text{ess\,sup} \; \Exp_{\tau} [ ( u_{\cT}-  u _{\tau} )\ind{\cT>\tau} | \cF_{\tau} ] 
\end{equation}
and define an optimal detection rule as a solution to 
 \begin{equation} \label{mod_crit}
\inf_{\cT} \cJ[\cT] ~ \text{when} ~  \Exp_{\infty} [-u_{\cT}] \geq \gamma,
\end{equation} 
a problem that is equivalent to \eqref{lord_crit} when $\{u_{t}\}$ is a random walk. However,  it has been shown in \cite{moustito}, \cite{chro}  that
the CUSUM test, with threshold $\nu$ chosen so that $\Exp_{\infty} [-u_{\cus}] = \gamma$, also solves problem \eqref{mod_crit}  whenever $\{u_{t}\}$ has continuous paths and  
\begin{equation} \label{full} 
\lim_{t\rightarrow \infty} \langle u \rangle_{t}=\infty \quad \Pro_{0}, \Pro_{\infty}-\text{a.s.},
\end{equation}
where $\langle u \rangle_{t}$ is the quadratic variation of $u_t$. The latter optimality result implies that CUSUM solves Lorden's original problem (\ref{lord_crit}) whenever  $\{u_{t}\}$ has continuous paths and $\langle u\rangle_{t}$ is proportional to $t$. This is the case, for example,  when each $\xi^{k}$ is a fractional Brownian motion (fBm) with Hurst index $H$ before the change and adopts a polynomial drift term with exponent $H+1/2$ after the  change \cite{chro}. In the special case that $H=1/2$, this implies the well-known optimality of CUSUM for detecting a constant drift in a Brownian motion, established
 by Shiryaev \cite{shircus} and Beibel \cite{bei}.

\subsection{The decentralized setup}
Centralized (classical) detection rules as the CUSUM test cannot be applied in a decentralized setup, where  communication constraints must be taken into account.
In this context, before defining a detection rule at the fusion center, we must first specify a \textit{communication scheme}, that will determine the information that will be transmitted from the sensors to the fusion center. Therefore, we define a \textit{decentralized} sequential detection rule as a pair $(\{\tilde{\cFt}\}, \cT)$, where  $\cT$ is an $\{\tilde{\cFt}\}$-stopping time and 
$\{\tilde{\cFt}\}$ is a filtration of the form 
\begin{equation} \label{flow}
\tilde{\cFt}:= \sigma (( \tau^k_{n},z_{n}^k): \tau_{n}^k \leq t, k=1, \ldots, K),
\end{equation}
where each $\{\tau_{n}^k\}_{n \in \mathbb{N}}$  is the sequence of communication times for sensor $k$ and $z_{n}^k$ is the message transmitted to the fusion center at time $\tau_{n}^{k}$. Each $\tau_{n}^k$ must be an $\{\cFt^k\}$-stopping time and each  $z_{n}^k$ an  $\cF^k_{\tau_{n}^k}$-measurable random variable that takes values in a \textit{finite} set, so that a small number of bits is required for its transmission to the fusion center.  Moreover,
since many applications are characterized by limited storage capacity, we require additionally that each $z_{n}^{k}$ is measurable with respect to 
$\sigma(\xi_{s}^{k}, \;  \tau_{n-1}^{k} \leq s \leq \tau_{n}^{k})$, the $\sigma$-algebra generated by the observations at sensor $k$ between its $n-1$ and $nth$ transmission. Note that this framework forbids communication between sensors or feedback from the fusion center to the sensors. Such possibilities impose a much heavier communication load on the network and raise questions regarding the design of the network architecture, which we do not consider here. For decentralized detection rules that require feedback we refer to  \cite{veer}.

Ideally, we would like to find the best possible decentralized detection rule, performing a joint optimization over the communication scheme at the sensors and the detection rule at the fusion center.  Such an optimization problem is highly intractable, even if one makes a number of simplifying assumptions \cite{veer}. For this reason, we will use the centralized CUSUM as the ultimate benchmark and compare any decentralized detection rule against it. We can only hope that such a detection rule attains the optimal centralized performance asymptotically. Thus, if $(\{\tilde{\cFt}\},\cT)$ is an arbitrary decentralized detection rule and $\cS$  the centralized CUSUM test so that $\Exp_{\infty}[-u_{\cT}]\ge\gamma= \Exp_{\infty}[-u_{\cS}]$ for any $\gamma>0$, we will say that $\cT$ is asymptotically optimal  of \textit{first} order if $\cJ[\cT]/\cJ[\cS] \rightarrow 1$ as $\gamma \rightarrow \infty$ and of \textit{second} order  if $\cJ[\cT] - \cJ[\cS] = \calo(1)$ as $\gamma\to\infty$. Clearly,  since $\cJ[S] \rightarrow \infty$ as $\gamma \rightarrow \infty$,  second order asymptotic  optimality is a stronger property, which guarantees that the inflicted performance loss remains bounded as the rate of false alarms goes to 0.  

As it is common in the literature of decentralized sequential detection, we will assume that observations from different sensors are independent. 
Thus, if $\Pro_{\tau}^{k}$ is the distribution of $\xi^{k}$, then  
$\Pro_{\tau}= \Pro_{\tau}^{1} \times \ldots \times \Pro_{\tau}^{K}$ for any $\tau \in [0, \infty]$ and, consequently, 
$$
u_{t}:= u_{t}^{1}+ \ldots + u_{t}^{K}, \quad \text{where} \quad u_{t}^k :=\log \frac{\text{d} \Pro_{0}^{k}}{\text{d} \Pro_{\infty}^{k}} \Big|_{\cFt^k},
$$ 
for any $t \geq 0$. We also assume that the local Kullback-Leibler (KL) information numbers, $I_{0}^k:= \Exp_{0}[u_{1}^k]$ and $I_{\infty}^k:= -\Exp_{\infty}[u_{1}^k]$, are positive and finite for every $1\leq k \leq K$ and, furthermore, we define the corresponding average KL-numbers
\begin{equation} \label{klaver}
\bar{I}_{0}:= \frac{1}{K}\Exp_{0}[u_{1}]= \frac{1}{K}\sum_{k=1}^{K} I_{0}^k \quad  \text{and} \quad  \bar{I}_{\infty}:= \frac{1}{K}\Exp_{\infty}[-u_{1}]=\frac{1}{K}\sum_{k=1}^{K} I_{\infty}^k.
\end{equation} 
In the remainder of this section,  we describe the main decentralized sequential detection rules in the literature, embedding them in the above framework.
We classify them into two categories; in the first, the sensors transmit systematically compressed versions of their data to the  fusion center 
and the latter combines the received messages in order to detect the change; in the second, each sensor detects individually the change and the fusion center
combines the local  sensor decisions.

\subsubsection{Q-CUSUM} 
Suppose that each sensor transmits to the fusion center quantized versions of its local log-likelihood ratio process at deterministic, equidistant times.
Specifically, if  for each sensor  the communication period is $r$  and the available alphabet  $\{1,\ldots, b\}$, where $b\geq 2$ is an integer,
then 
\begin{equation} \label{zdd}
\tau_{n}^k=rn \; \text{and} \; 
z_n^k= \sum_{j=1}^{b} j \, \ind{\Gamma_{j-1}^k \leq  u^k_{rn}- u^k_{r(n-1)} < {\Gamma}_{j}^k},
\end{equation} 
where  $-\infty=:\Gamma_{0}^k< {\Gamma}_{1}^k<\ldots  < {\Gamma}_{b}^k:=\infty$ are fixed thresholds. This  communication scheme induces synchronous communication to the fusion center, which receives at each time $\tau_{n}^k=rn$ the $K$-dimensional vector $(z_{n}^{1}, \ldots, z_{n}^{K})$. If we additionally assume that each $\{u_{t}^k\}$ has stationary and independent increments,  then a  natural detection rule at the fusion center is the corresponding CUSUM stopping time
\begin{equation} \label{eq:cus2}
\hat{\cS} := r \cdot \inf\{n \in \mathbb{N} : \hat{y}_{n} \geq \hat{\nu}\},
\end{equation}
where the threshold $\hat{\nu}$ is chosen so that the false alarm constraint be satisfied with equality  and the  CUSUM statistic $\{\hat{y}_{n}\}$ admits the following recursion:
\begin{equation}
\hat{y}_{n}:= (\hat{y}_{n-1})^{+} + \sum_{k=1}^{K} \sum_{j=1}^{b} \left[  \ind{z_{n}^k=j} \log \frac{\Pro_{0}(z_{n}^k=j)}{\Pro_{\infty}(z_{n}^k=j)} \right] , ~ \hat{y}_{0}:=0,
\label{eq:cus1}
\end{equation}
Note that we have to multiply by $r$ in \eqref{eq:cus2} in order to return to physical time units, since the samples are acquired with a rate $1/r$.
We call this detection scheme Q-CUSUM, where Q stands for the ``quantization'' employed by this method.  This detection rule has been studied in \cite{crow}, \cite{mei}, \cite{tart}  in the case that the sensors take i.i.d. observations and each sensor communicates with the fusion center at \textit{every} observation time ($r=1$). It is easy to see that  as $\gamma \rightarrow \infty$
$$ 
\frac{\cJ[\hat{\cS}]}{\cJ[\cS]} \rightarrow \frac{r \bar{I}_{0}}{\hat{I}_{0}}, \quad \text{where} \; 
\hat{I}_{0}:= \frac{1}{K} \sum_{i=1}^{K} \sum_{j=1}^{b}  \Pro_{0}(z_{n}^k=j)\log \frac{ \Pro_{0}(z_{n}^k=j)}{\Pro_{\infty}(z_{n}^k=j)} ,
$$ 
and $\bar{I}_{0}$ is the average KL-number defined  in (\ref{klaver}), which implies that the asymptotic  performance of $\hat{\cS}$ is optimized by selecting  thresholds $\{\Gamma_{j}^k\}$ in order to maximize  $\hat{I}_{0}$. However, for any choice of thresholds,  $\hat{\cS}$ is not (even first-order) asymptotically optimal, since $r \bar{I}_{0}> \hat{I}_{0}$ (see, e.g., \cite{tsi2}).


\subsubsection{Fusion of local CUSUM rules}
Suppose now that  each sensor $k$ communicates at the following times
\begin{equation} \label{meitau}
\tau_{n}^k= \inf\{ t \geq \tau_{n-1}^k: y^k_{t}  \geq c^{k} \} ,
\end{equation}
where  $y^k_{t}:= u^k_{t} - \min_{0 \leq s \leq t} u^k_{s}$ is the local CUSUM statistic  and $c^{k}$ is a fixed, positive threshold. In this way,  
the sensors communicate with the fusion center only  to announce they have detected the change. This requires only  \textit{one-bit} transmissions,
which means that even if the network supports the transmission  of multi-bit messages, this flexibility is not going to be useful.

There are many reasonable fusion center policies that can be based on (\ref{meitau}). For example, the fusion center may raise an alarm  the first time any sensor communicates, i.e., at  $\min_{k} \tau_{1}^k$ (min-CUSUM). This is clearly a one-shot scheme, i.e., it requires transmission of at most one bit from each sensor, and as one would expect it is asymptotically suboptimal (see, e.g., \cite{tart} for the case of i.i.d. observations and \cite{moustdec} for the case of Brownian motions). An alternative possibility is to raise an alarm the first time that all sensors communicate \textit{simultaneously}, i.e., at
$$ 
\cM:= \inf\{ t: y_{t}^k \geq c^{k} , \; \forall \;  k=1, \ldots, K\}.
$$ 
This rule was suggested (although in a different form) by  Mei \cite{mei}, where it was shown that when each $u^{k}$ is a random walk with a finite second moment,  $\cM$ is first-order asymptotically optimal (in particular, $\cJ[\cM]- \cJ[\cS]= \calo(\sqrt{\log \gamma})$), as long as each $c^{k}$ is  proportional to the local KL-number,  $I_{0}^k$. Since the constant of proportionality is determined by $\gamma$, this means that for this decentralized scheme,  contrary to Q-CUSUM, it is not possible to control how often each sensor communicates  with the fusion center. However, by construction, the induced communication activity will be intense only after the  change  has occurred; before the change, a sensor communicates only to report a ``local false alarm'', which is a rare event. Finally, despite its asymptotic optimality, it is known (see, e.g., \cite{mei}, \cite{tart}) that the non-asymptotic performance of $\cM$ can be worse than that of Q-CUSUM when the latter requires transmission of one-bit messages ($b=2$) at every observation time ($r=1$),  especially when $K$ is large.

\section{D-CUSUM}
In this section, we define and analyze the decentralized detection structure that we propose. Thus, we  suggest that each sensor $k$ communicates with the fusion center at the following sequence of $\{\cFt^k\}$-stopping times
\begin{equation} \label{taucd}
\tau_{n}^k := \inf \{ t > \tau_{n-1}^k: u^k_{t} - u^k_{\tau_{n-1}^k} \notin (-\uDi, \oDi) \} , ~ n \in \mathbb{N},
\end{equation}
where $\tau_0^k:=0$ and  $\oDi, \uDi$ are fixed,  positive thresholds. For every $n \in \mathbb{N}$ and $t>0$ we set   
$$\tau^{k}(t):=\tau^{k}_{m_{t}^{k}}, \quad m_{t}^k:= \max\{n \in \mathbb{N}: \tau_{n}^k \leq t\}, \quad \ell_{n}^{k}:= u^k_{\tau_{n}^k}- u^k_{\tau_{n-1}^k},$$
i.e., $m_{t}^k$ is the number of messages that have been transmitted by sensor $k$ up to time $t$, $\tau^{k}(t)$ is the most recent communication time for sensor $k$ at time $t$ and $\ell_{n}^{k}$ is the accumulated log-likelihood ratio at sensor $k$ in the time-interval $[\tau_{n-1}^k, \tau^k_{\tau_{n}^k}]$.

At time $\tau_n^k$, we suggest that sensor $k$ transmits to the fusion center the following message 
\begin{equation} \label{zcdmany}
z_n^k:=\left\{\begin{array}{cl} j , &\text{if} \quad \oei_{j-1} \leq \ell_{n}^{k} -\oDi < \oei_{j}\\
  -j , &\text{if} \quad  -\uei_{j}  < \ell_{n}^{k} + \uDi \leq -\uei_{j-1}
\end{array}\right.
j=1,\ldots,d,
\end{equation} 
where  $\oei_{0}:=\uei_{0}:=0$,  $\oei_{d}:=\uei_{d}:= \infty$,  
$\{\oei_{j}, \uei_{j}\}_{1 \leq j \leq d-1}$ are fixed,  positive threshold and $d$ a positive integer. We will also use the following notation
$$\oDi_{j}:=\oDi + \oei_{j-1} , \quad  \uDi_{j}:=\uDi + \uei_{j-1}, \quad j=1,\ldots,d,$$
which allows us to rewrite (\ref{zcdmany}) as follows
$$
z_n^k=\left\{\begin{array}{cl} j , &\text{if} \quad \oDi_{j} \leq \ell_{n}^{k} < \oDi_{j+1}\\
  -j , &\text{if} \quad  -\uDi_{j+1} <  \ell_{n}^{k}  \leq -\uDi_{j}
\end{array}\right., \quad 
j=1,\ldots,d.
$$ 
When $d=1$, $z_{n}^{k}$ is a one-bit message of the form 
 \begin{equation} \label{zcd}
z_n^k:=\left\{\begin{array}{cl}1 , &\text{if}~  \ell^k_{n}  \geq \oDi\\
-1 , &\text{if}~ \ell^k_{n} \leq -\uDi
\end{array}\right.
\end{equation}
that simply informs the fusion center whether  $\ell^k_{n} \geq \oDi$ or $\ell^k_{n}\leq -\uDi$. When $d \geq 2$, $z_{n}^{k}$ requires the transmission of  
$\lceil \log_{2}(2d) \rceil=1+ \lceil \log_{2} d\rceil$ bits 
and the fusion center also obtains information regarding the size of the overshoot.

The stopping times (\ref{taucd}) and the messages (\ref{zcdmany}) determine the flow of information (\ref{flow}) at the fusion center.
Assuming that the fusion center uses this information and approximates each local log-likelihood ratio $\{u_{t}^{k}\}$ by some statistic $\{\tilde{u}_{t}^{k}\}$,
we suggest  the  following detection rule
\begin{equation} \label{dcusum}
\dcus:= \inf \{ t \geq 0: \tilde{y}_{t}  \geq \tilde{\nu} \}, ~\text{where}~ \tilde{y}_{t}:=\tilde{u}_{t} - \inf_{0 \leq s \leq t} \tilde{u}_{s},\; \tilde{u}_{t}:= \sum_{k=1}^{K} \tilde{u}_{t}^k
\end{equation}
and threshold $\tilde{\nu}$ is defined so that $\Exp_{\infty}[-u_{\dcus}]=\gamma$. The appropriate selection for $\tilde{u}_{t}^{k}$, as well as the design and 
analysis  of the resulting detection rule, is different in discrete and continuous time and, for this reason, we will treat these two setups separately. We will see, however, that the proposed detection structure, that we will call  D-CUSUM, can be designed in order to have strong asymptotic optimality properties in both cases.

\subsection{Continuous-time setup}\label{sec:D-CUSUM-cont}
Suppose that each $\{u^{k}_{t}\}$ is a continuous-time process with continuous paths so that condition (\ref{full}) is satisfied, in which case we have
the following closed-form expressions for $\cJ[S]$ and $\gamma$ in terms of threshold $\nu$ (see, e.g., \cite{moustito},\cite{chro}):  
\begin{align} \label{fapito}
\begin{split}
 \gamma &= \Exp_{\infty}[-u _{\cus}]= \Exp_{\infty}[\langle u \rangle_{\cus}] = e^{\nu}-\nu-1 , \\
 \cJ[\cus] &=  \Exp_{0}[u _{\cus}]= \Exp_{0}[\langle u \rangle_{\cus}]= e^{-\nu}+\nu-1.
\end{split}
\end{align}
Then, each $\ell_{n}^{k}$ is exactly equal to either $\oDi$ or $-\uDi$ and, consequently, at $\tau_{n}^k$ sensor $k$ can transmit  to the fusion center the \textit{exact} value of $\ell_{n}^{k}$ by simply communicating a \textit{one-bit} message of the form (\ref{zcd}).  As a result, the fusion center is able to recover the value of $u^k$ at any time $\tau_{n}^k$, since $u_{\tau_{n}^k}^k = \ell_{1}^{k}+ \ldots+\ell_{n}^{k}$, and a natural approximation for $u_t^k$ at some arbitrary time $t$ is the corresponding most recently reproduced value, i.e., 
\begin{equation} \label{freecd}
 \tilde{u}_{t}^k := u^{k}_{\tau^{k}(t)} =\sum_{n=1}^{m_{t}^{k}} \ell_{n}^{k}.
\end{equation}

The proposed scheme has a number of practical advantages. First of all, the  fusion statistic $\{\tilde{y}_{t}\}$ is piecewise-constant and its value needs to 
be updated only at communication times, according to the following convenient formula:
$$
\tilde{y}_{\tau_{n}^{k}}=(\tilde{y}_{\tau_{n}^{k}\text{-}})^{+} + \oDi\ind{z_n^k=1}-\uDi\ind{z_n^k=-1}.
$$
Compare this with the centralized, continuous-time CUSUM statistic, $\{y_{t}\}$, 
which does not in general admit such a recursion 
and  whose calculation at the fusion center requires high-frequency transmission of ``infinite-bit'' messages from the sensors. 

Moreover, it is possible  to control the communication rate of sensor $k$ by selecting appropriately $\oDi$ and $\uDi$. 
Since $\Exp_{i}[\tau_{n}^k -\tau_{n-1}^k]$, $i=0, \infty$ in general depend on $n$, 
these thresholds can be selected  in order to attain target values for $\Exp_{0}[\ell^k_{n}]$ and  $\Exp_{\infty}[-\ell^k_{n}]$,  
which do not depend on $n$ and are given by   $\Exp_{0}[\ell^k_{n}]=s(\uDi , \oDi)$ and $\Exp_{\infty}[-\ell^k_{n}] =s(\oDi,\uDi)$, where 
$$s(x,y) :=  \frac{-x(e^{y}-1)+ye^{y}(e^{x}-1)}{e^{x+y}-1}.$$
In this way, the specification of $\oDi$ and $\uDi$ simply requires the solution of a (non-linear) system of two equations.

From the previous discussion  it should be clear that  D-CUSUM is much more preferable than the corresponding centralized CUSUM from a practical point of view. 
It turns out that it also has excellent performance characteristics, making any  additional benefit of the optimal centralized CUSUM test negligible relative to its implementation cost. This becomes clear with the following theorem, which provides a non-asymptotic upper bound on the performance loss of the proposed detection structure.   
 
\begin{theorem} \label{prop1}
For any $\gamma$ and $\{\oDi,\uDi\}_{1 \leq k \leq K}$ we have
\begin{equation} \label{order2cd}
\cJ[\dcus] - \cJ[\cus] \leq 4 \,  K \,  \D_{\max} , \quad \text{where} \quad \D_{\max}:= \max_{1 \leq k \leq K} \max \{\oDi, \uDi\}.
\end{equation}
\end{theorem}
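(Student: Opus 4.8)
The plan is to route everything through a single \emph{pathwise} comparison between the centralized CUSUM statistic $\{y_{t}\}$ and its reconstruction $\{\tilde y_{t}\}$, and then to transfer that comparison to the false-alarm and the delay functionals separately, using monotonicity in each case. First I would record the approximation error. Because the paths are continuous, each increment $\ell^{k}_{n}$ equals exactly $\oDi$ or $-\uDi$, and for $t$ between two consecutive communication times of sensor $k$ the increment $u^{k}_{t}-u^{k}_{\tau^{k}(t)}$ stays in $(-\uDi,\oDi)$; hence $\sup_{t\ge0}|u^{k}_{t}-\tilde u^{k}_{t}|\le\max\{\oDi,\uDi\}$, with $\tilde u^{k}$ as in (\ref{freecd}). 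Summing over $k$ gives $\sup_{t}|u_{t}-\tilde u_{t}|\le K\D_{\max}$, and since $|\inf_{s}u_{s}-\inf_{s}\tilde u_{s}|\le\sup_{s}|u_{s}-\tilde u_{s}|$, the two CUSUM statistics obey the uniform bound $\sup_{t}|y_{t}-\tilde y_{t}|\le 2K\D_{\max}$.

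Next I would sandwich $\dcus$ between two centralized CUSUM stopping times. Writing $\cus_{a}:=\inf\{t\ge0:y_{t}\ge a\}$ so that $\cus=\cus_{\nu}$, formula (\ref{fapito}) applied with threshold $a$ gives $\Exp_{\infty}[-u_{\cus_{a}}]=e^{a}-a-1$ and $\cJ[\cus_{a}]=e^{-a}+a-1$ for every $a>0$. The estimate $\sup_{t}|y_{t}-\tilde y_{t}|\le2K\D_{\max}$ immediately yields $\cus_{\tilde\nu-2K\D_{\max}}\le\dcus\le\cus_{\tilde\nu+2K\D_{\max}}$, with $\tilde\nu$ the D-CUSUM threshold of (\ref{dcusum}). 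To eliminate $\tilde\nu$ I would use that $T\mapsto\Exp_{\infty}[-u_{T}]$ is nondecreasing in the stopping time $T$, which holds because under $\Pro_{\infty}$ this quantity is (up to a constant) the expectation of the nondecreasing process $\langle u\rangle$. Applying this to the left inequality together with the calibration $\Exp_{\infty}[-u_{\dcus}]=\gamma=\Exp_{\infty}[-u_{\cus_{\nu}}]$ forces $e^{\tilde\nu-2K\D_{\max}}-(\tilde\nu-2K\D_{\max})-1\le\gamma$, whence $\tilde\nu\le\nu+2K\D_{\max}$ and therefore $\dcus\le\cus_{\nu+4K\D_{\max}}$.

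Finally I would transfer this domination to the delay. An analogous monotonicity holds for $\cJ$: if $\cT_{1}\le\cT_{2}$ then $\cJ[\cT_{1}]\le\cJ[\cT_{2}]$, because under $\Pro_{0}$ the process $u$ is a submartingale (its drift being the nondecreasing $\langle u\rangle$), so conditioning on $\cF_{\tau}$ the integrand $(u_{\cT}-u_{\tau})\ind{\cT>\tau}$ only grows when $\cT$ is enlarged --- on $\{\cT_{1}>\tau\}$ by optional stopping, and on $\{\cT_{1}\le\tau<\cT_{2}\}$ because the added contribution is nonnegative. Hence $\cJ[\dcus]\le\cJ[\cus_{\nu+4K\D_{\max}}]=e^{-(\nu+4K\D_{\max})}+(\nu+4K\D_{\max})-1$, and subtracting $\cJ[\cus]=e^{-\nu}+\nu-1$ leaves $4K\D_{\max}+\bigl(e^{-(\nu+4K\D_{\max})}-e^{-\nu}\bigr)\le4K\D_{\max}$, which is exactly (\ref{order2cd}).

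The main obstacle is the rigorous justification of the two monotonicities and the attendant optional-stopping and integrability issues: one must check that both $\Exp_{\infty}[-u_{T}]$ and the worst-case delay functional $\cJ[\cT]$ are genuinely monotone under the pathwise ordering of stopping times, invoking the supermartingale/submartingale structure of $u$ under $\Pro_{\infty}$ and $\Pro_{0}$ together with the integrability guaranteed by (\ref{full}). Once these are in place, the remainder is the elementary bookkeeping of the constant $4=2+2$, where one factor of two comes from comparing $y$ with $\tilde y$ and the other from combining the threshold calibration with the overshoot bound.
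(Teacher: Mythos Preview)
Your proposal is correct and follows essentially the same route as the paper: the pathwise bound $\sup_{t}|y_{t}-\tilde y_{t}|\le 2K\D_{\max}$, the resulting sandwich $\cS_{\tilde\nu-2K\D_{\max}}\le\dcus\le\cS_{\tilde\nu+2K\D_{\max}}$, calibration via the monotonicity of $\Exp_{\infty}[\langle u\rangle_{\cdot}]$, and then the explicit formula (\ref{fapito}). The only cosmetic difference is bookkeeping: the paper derives the two-sided estimate $|\nu-\tilde\nu|\le 2K\D_{\max}$ (its Lemma~\ref{lem}) and bounds $\cJ[\cS_{\tilde\nu+2K\D_{\max}}]-\cJ[\cS_{\nu}]$ directly, whereas you extract only $\tilde\nu\le\nu+2K\D_{\max}$ and pass through $\cS_{\nu+4K\D_{\max}}$; both yield the same constant $4K\D_{\max}$.
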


\begin{proof}
The proof is presented in Appendix\,\ref{app:A}.
\end{proof}

The bound provided in \eqref{order2cd} implies that for any fixed thresholds $\{\oDi, \uDi\}$ and any
number of sensors $K$, $\cJ[\dcus] - \cJ[\cus]=\calo(1)$  as $\gamma \rightarrow \infty$, i.e., $\dcus$ is \textit{second}-order asymptotically optimal. In the case of a large sensor-network ($K \rightarrow \infty$), this property is preserved only if we have an asymptotically  high rate of communication, specifically if $\D_{\max} \rightarrow 0$ so that $K  \D_{\max} =\calo(1)$.  However, since we want to avoid intense transmission activity, it  is more interesting to see that $\dcus$ remains \textit{first}-order asymptotically optimal  when $K \rightarrow \infty$ and $\D_{\max} \rightarrow \infty$ so that $K \D_{\max}=o(\log \gamma)$. Indeed, from (\ref{fapito}) and (\ref{order2cd}) we have 
$$
\frac{\cJ[\dcus]}{\cJ[\cus]} = 1+ \frac{\cJ[\dcus] - \cJ[\cus]}{\cJ[\cus]} \leq  1+  \frac{4 K \D_{\max}}{e^{-\nu}+\nu-1} 
$$
and our claim now also follows  from  (\ref{fapito}), which implies that  $\nu=\log \gamma+o(1)$.

\subsection{Discrete-time setup}
Suppose now that each $\{u^{k}_{t}\}$ is  a random walk, i.e., the increments $\{u^k_{t}-u_{t-1}^{k}\}_{t \in \mathbb{N}}$ are i.i.d. This implies that each
$(\tau_{n}^{k}-\tau_{n-1}^{k}, z_{n}^{k},\ell_{n}^{k})_{n\in \mathbb{N}}$ is a sequence of independent triplets with the same distribution as 
$(\tau_{1}^{k}, z_{1}^{k},\ell_{1}^{k})$. As a result, thresholds $\oDi$ and $\uDi$ can now be selected in order to attain target values for $\Exp_{i}[\tau_{1}^{k}]$, $i=0,\infty$. However, the main difference with the continuous-time setup is that now each $\ell_{n}^{k}$ is no longer restricted to the binary set $\{\oDi, -\uDi\}$. Thus, it now makes sense to have larger than binary alphabets ($d>1$), in which case
we also need to select thresholds $\{\oei_{j}, \uei_{j}\}_{1  \leq  j \leq d-1}$ (recall that  $\oei_{0}=\uei_{0}:=0$,  $\oei_{d}=\uei_{d}:= \infty$). We suggest the following specification  
\begin{align}
&\Pro_0(\ell^k_{1}-\oDi \geq \oei_j \, | \, \ell_{1}^{k} \geq \oDi)=1-\frac{j}{d} =\Pro_\infty(\ell^k_{1}+\uDi \leq -\uei_j \, | \,\ell_{1}^{k} \leq -\uDi),
\label{eq:levels}
\end{align}
which guarantees that  the overshoot $\ell_{1}^{k} - \oDi$ (resp.  $-(\ell_{1}^{k} + \uDi)$ is equally likely to lie in each interval $[\oei_{j-1},\oei_{j})$  (resp. $(-\uei_{j}, -\uei_{j-1}]$ given that  $\ell_{1}^{k} \geq \oDi$ (resp.  $\ell_{1}^{k} \leq -\uDi$), i.e., 
\begin{align*}
&\Pro_0(\ell^k_{1}-\oDi \in [\oei_{j-1}, \oei_j) \, | \, \ell_{1}^{k} \geq \oDi)=\frac{1}{d}
=\Pro_\infty(\ell^k_{1}+\uDi \in (-\uei_{j}, -\uei_{j-1}] \, | \, \ell_{1}^{k} \leq -\uDi),
\end{align*}
or, equivalently,  $\Pro_0( z_{1}^{k}=j  \, | \, z_{1}^{k}>0 )=1/d
=\Pro_\infty( z_{1}^{k}=-j  \, | \, z_{1}^{k}<0 )$,  for every $1 \leq j \leq d$. 
Clearly, all these thresholds can be easily computed off-line, as their computation requires the simulation of  the pair $(\tau_{1}^{k}, \ell_{1}^{k})$ under both $\Pro_{0}$  and $\Pro_{\infty}$.  
Moreover, in what follows,  we assume that $u_{1}^{k}$ is unbounded and  absolutely continuous with a positive density.  Then,  $\oei_{d-1}, \uei_{d-1} \rightarrow \infty$  as $d \rightarrow \infty$, whereas 
\begin{equation} \label{del}
\epsilon^{k}:= \max_{1 \leq j \leq d-1} \, \{ \oei_{j}-\oei_{j-1} \, , \,  \uei_{j}-\uei_{j-1} \} \rightarrow 0 \quad  \text{as} \quad  d \rightarrow \infty.
\end{equation} 

In order to establish a  \textit{second-order} asymptotic optimality property for $\tilde{\cS}$, as in the continuous-time setup,  
we need a lower bound for the optimal centralized performance $\cJ[\cS]$ up to a constant term as $\gamma \rightarrow \infty$. Moreover, in order to obtain 
the inflicted performance loss as $K \rightarrow \infty$, we need to characterize the growth of this constant term  as $K \rightarrow \infty$. This is done in the following lemma, under a second moment condition on each $u_{1}^{k}$.

\begin{lemma}\label{lem:6}
If $\Exp_{0}[(u_{1}^{k})^{2}]<\infty$ for every $1  \leq k \leq K$, then for any $\gamma$ we have 
\begin{equation}
\cJ[\cS]= \Exp_0[u_{\cS}]\ge\log\gamma- {\mit \Theta}(K).
\end{equation}
\end{lemma}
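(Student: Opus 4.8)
The plan is to bound $\cJ[\cS]=\Exp_0[u_{\cS}]$ below by the threshold $\nu$, and separately to bound $\nu$ below by $\log\gamma$, absorbing every $K$-dependent correction into a single $\calo(K)$ term. Writing $M:=\sup_{t\ge0}(-u_t)\ge0$ for the (a.s.\ finite) all-time maximum of the negatively drifting walk $-u_t$ under $\Pro_0$, I would reduce the lemma to the two estimates
\begin{equation*}
\Exp_0[u_{\cS}]\ \ge\ \nu-\Exp_0[M],\qquad \nu\ \ge\ \log\gamma-\calo(K),
\end{equation*}
together with $\Exp_0[M]=\calo(K)$; combining them yields $\cJ[\cS]\ge\log\gamma-\calo(K)$, which is the assertion with ${\mit\Theta}(K)=\calo(K)$.

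The first estimate is immediate from the CUSUM structure. Since $y_{\cS}\ge\nu$ and $y_t=u_t-\inf_{0\le s<t}u_s$, at the stopping time one has $u_{\cS}=y_{\cS}+\inf_{0\le s<\cS}u_s\ge\nu+\inf_{s\ge0}u_s=\nu-M$, so taking $\Exp_0$ gives the displayed inequality. To control $\Exp_0[M]$ I would invoke a standard bound on the expected supremum of a random walk with negative drift (Lorden's inequality): with per-step drift $\Exp_0[-u_1]=-K\bar I_0<0$ and finite second moment (guaranteed by the hypothesis), $\Exp_0[M]\le \Exp_0[(u_1^+)^2]/(2K\bar I_0)$ up to an absolute constant. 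The key point is the $K$-scaling: because $u_1=\sum_{k}u_1^k$ is a sum of $K$ independent terms, $\Exp_0[(u_1)^2]=\mathrm{Var}_0(u_1)+(K\bar I_0)^2=\calo(K^2)$, while the denominator is of order $K$; hence $\Exp_0[M]=\calo(K)$.

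The second estimate — equivalently the upper bound $\gamma\le e^{\nu+\calo(K)}$ — is the main obstacle, since it requires turning the informal relation $\gamma\approx e^{\nu}$ into a rigorous inequality with explicit $K$-dependence. Here I would use Wald's identity $\gamma=\Exp_\infty[-u_{\cS}]=K\bar I_\infty\,\Exp_\infty[\cS]$ together with the regenerative (reflected random walk) structure of $\{y_t\}$ under $\Pro_\infty$: decomposing $\cS$ into the excursions of $y$ away from $0$, the number of excursions before the one that first reaches $\nu$ is geometric with success probability $p$, so $\Exp_\infty[\cS]\le \Exp_\infty[L]/p$ with $L$ a single excursion length. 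A lower bound on $p$ is obtained from Wald's likelihood-ratio identity: $p=\Pro_\infty(\text{reach }\nu)=\Exp_0[e^{-u_{\sigma}};\,\sigma<\infty]\ge e^{-\nu}\,\Exp_0[e^{-R}]$, where $\sigma$ is the upcrossing time and $R\ge0$ the overshoot of the aggregated walk over level $\nu$. Since the expected overshoot is itself $\calo(K)$ (again because $u_1$ has mean and second moment polynomial in $K$), Jensen gives $\Exp_0[e^{-R}]\ge e^{-\calo(K)}$ and hence $p\ge e^{-\nu-\calo(K)}$. As $\Exp_\infty[L]$ and $K\bar I_\infty$ are at most polynomial in $K$, their logarithms are $\calo(\log K)=\calo(K)$, and collecting terms yields $\gamma\le e^{\nu+\calo(K)}$, i.e.\ $\nu\ge\log\gamma-\calo(K)$. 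The delicate part throughout is keeping the $K$-dependence explicit in the overshoot and excursion-length moments; this is precisely where the second-moment hypothesis $\Exp_0[(u_1^k)^2]<\infty$ is used.
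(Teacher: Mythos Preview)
Your overall strategy---split into $\Exp_0[u_{\cS}]\ge\nu-\calo(K)$ and $\nu\ge\log\gamma-\calo(K)$---is exactly the paper's. For the first estimate you take a cleaner route than the paper: instead of the SPRT representation $\Exp_0[u_{\cS}]=\Exp_0[u_{\cT}]/\Pro_0(u_{\cT}\ge\nu)$ (which forces a change of measure $\Pro_0\mapsto\Pro_\infty$ and a separate lower bound on $\Pro_0(u_{\cT}\ge\nu)$), you use the pathwise inequality $u_{\cS}\ge\nu-M$ and bound $\Exp_0[M]$ directly. This is a genuine simplification. (One typo: the Lorden/Kingman bound involves $\Exp_0[((-u_1)^+)^2]=\Exp_0[(u_1^-)^2]$, not $\Exp_0[(u_1^+)^2]$; the $\calo(K)$ conclusion is unaffected.)

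For the second estimate your argument tracks the paper's almost exactly, but there is a real gap in your lower bound on the success probability. The quantity $p$ that enters the regenerative decomposition $\Exp_\infty[\cS]=\Exp_\infty[\cT]/p$ is $p=\Pro_\infty(T_\nu^+<T_0^-)$, not $\Pro_\infty(T_\nu^+<\infty)$. When you write $p=\Exp_0[e^{-u_\sigma};\sigma<\infty]\ge e^{-\nu}\Exp_0[e^{-R}]$ with $\sigma$ ``the upcrossing time'', you are computing the latter, which only gives an upper bound on $p$. The correct change of measure yields
\[
p=\Exp_0\bigl[e^{-u_{\cT}}\ind{u_{\cT}\ge\nu}\bigr]
= e^{-\nu}\,\Exp_0\bigl[e^{-R}\mid u_{\cT}\ge\nu\bigr]\,\Pro_0(u_{\cT}\ge\nu),
\]
so besides the Jensen bound $\Exp_0[e^{-R}\mid u_{\cT}\ge\nu]\ge e^{-\calo(K)}$ you also need $\Pro_0(T_\nu^+<T_0^-)$ bounded away from zero uniformly in $\nu$ and $K$. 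The paper supplies this via $\Pro_0(T_\nu^+<T_0^-)\ge\Pro_0(T_0^-=\infty)=1/\Exp_0[T_0^+]\ge \bar I_0^2/(\bar\sigma_0^2+\bar I_0^2)$, using Wald's identity and Lorden's overshoot bound once more. With that piece inserted, your argument goes through.
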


\begin{proof}
It is well known that the worst case for the optimal centralized CUSUM is when the change occurs at $\tau=0$, which implies the equality in the lemma. The proof of the inequality is presented in Appendix \ref{app:B}.
\end{proof}

If  each sensor $k$ transmitted the exact value of each $\ell_{n}^{k}$ at time $\tau_{n}^{k}$, as in the continuous-time setup,  then we could  approximate $u_{t}^{k}$ by (\ref{freecd}) and we could work in the same way as  Theorem \ref{prop1} to show that $\cJ[\dcus] -\cJ[\cus] = \calo(K \D_{\max})$.  However, this is not possible in a discrete-time setup, since $\ell_{n}^{k}$ cannot be fully recovered at the fusion center when sensor $k$ transmits only a small number of bits at time $\tau_{n}^{k}$. Our main goal in the remainder of the paper is to show that it is actually possible to design D-CUSUM in discrete time so that it is second-order asymptotically optimal even if each sensor transmits a small number of bits (such as 2 or 3) in every communication. In order to do this, we  approximate $u_{t}^{k}$ by 
\begin{equation} \label{tilder}
\tilde{u}_{t}^k := \sum_{n=1}^{m_{t}^{k}} \tilde{\ell}_{n}^{k},
\end{equation}
where  $\tilde{\ell}_{n}^{k}$ is the log-likelihood ratio of $z_n^k$, i.e., 
\begin{align}
 \tilde{\ell}_{n}^{k} &:= \sum_{j=1}^{d} \Bigl[ \oli_{j} \, \ind{z_{n}^k=j} - \uli_{j} \, \ind{z_{n}^k=-j} \Bigr], \quad  \label{ells} \\
 \oli_{j} &:= \log  \frac{  \Pro_{0}(z_1^k=j)}{\Pro_{\infty}(z_1^k=j)}, ~   -\uli_{j} := \log   \frac{\Pro_{0}(z_1^k=-j)}{\Pro_{\infty}(z_1^k=-j)}. 
 \label{lambdas}
\end{align}
The log-likelihood ratios $\{\oli_{j},\uli_{j}\}$ do not admit closed-form expressions, however they can be easily computed via simulation. This is not an easy task if one uses their definition in  (\ref{lambdas}), which requires simulation of rare events, especially when $\oDi, \uDi$ are  large. However, 
we can overcome this problem using the following lemma. 

\begin{lemma} \label{lem:0}
For every  $1\leq j \leq d$, $\oli_{j}=\oDi_{j}+\oRi_{j}$ and $\uli_{j}=\uDi_{j}+\uRi_{j}$, where 
\begin{align} \label{import}
\begin{split}
\oRi_{j} &:= - \log \Exp_{0}[ e^{-(\ell^k_{1}- \oDi_{j})} \, | \, z_{1}^{k}=j ] >0 , \\
\uRi_{j} &:= - \log \Exp_{\infty}[ e^{\ell^k_{1}+ \uDi_{j}} \, | \, z_{1}^{k}=-j]>0.
\end{split}
\end{align}
Moreover, for every  $1\leq j \leq d-1$, $\oRi_{j}, \uRi_{j} \leq \epsilon^{k}$ and if, additionally, $\Exp_{i}[(u_{1}^{k})^{2}]<\infty$, $i=0, \infty$, then
\begin{align} \label{import2}
\begin{split}
\oRi_{d} &\leq  \Exp_{0}[ \ell^k_{1}- \oDi_{d} \, | \, z_{1}^{k}=d ] \leq \Theta(1) \; d \; \Exp_{0}[(u_{1}^{k})^{2} \ind{u_{1}^{k} \geq \oei_{d-1}}], \\
\uRi_{d} &\leq  \Exp_{\infty}[ -(\ell^k_{1} + \uDi_{d}) \, | \, z_{1}^{k}=-d ] \leq  \Theta(1)  \; d \; \Exp_{\infty}[(u_{1}^{k})^{2} \ind{-u_{1}^{k} \geq \uei_{d-1}}] ,
\end{split}
\end{align}
where $\Theta(1)$ is a term that does not depend on $d$ and is bounded  from above and below as $\oDi, \uDi \rightarrow \infty$.
\end{lemma}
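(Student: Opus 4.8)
The plan is to first derive the \emph{exact} representation $\oli_{j}=\oDi_{j}+\oRi_{j}$ by a change of measure, and then read off positivity and all three bounds from it. Since $u^{k}$ is the local log-likelihood ratio, the density $d\Pro_{0}^{k}/d\Pro_{\infty}^{k}$ restricted to $\cF^{k}_{\tau_{1}^{k}}$ equals $e^{\ell_{1}^{k}}$ (optional stopping of the likelihood-ratio martingale at the $\{\cF_{t}^{k}\}$-stopping time $\tau_{1}^{k}$, using $\ell_{1}^{k}=u^{k}_{\tau_{1}^{k}}$ since $\tau_{0}^{k}=0$). Applying this to the $\cF^{k}_{\tau_{1}^{k}}$-measurable event $\{z_{1}^{k}=j\}$ gives $\Pro_{\infty}(z_{1}^{k}=j)=\Exp_{0}[e^{-\ell_{1}^{k}}\ind{z_{1}^{k}=j}]$, hence $\oli_{j}=-\log\Exp_{0}[e^{-\ell_{1}^{k}}\,|\,z_{1}^{k}=j]$; factoring out $e^{-\oDi_{j}}$ yields exactly (\ref{import}). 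The lower-tail identity $\uli_{j}=\uDi_{j}+\uRi_{j}$ is symmetric, starting instead from $\Pro_{0}(z_{1}^{k}=-j)=\Exp_{\infty}[e^{\ell_{1}^{k}}\ind{z_{1}^{k}=-j}]$.

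Positivity and the bound for $j\le d-1$ are then immediate. On $\{z_{1}^{k}=j\}$ we have $\ell_{1}^{k}\ge\oDi_{j}$, so $e^{-(\ell_{1}^{k}-\oDi_{j})}\le1$ and $\oRi_{j}\ge0$; absolute continuity of $u_{1}^{k}$ makes $\ell_{1}^{k}>\oDi_{j}$ occur with positive conditional probability, forcing the expectation strictly below $1$ and $\oRi_{j}>0$. For $1\le j\le d-1$ the event also entails $\ell_{1}^{k}<\oDi_{j+1}$, so $0\le\ell_{1}^{k}-\oDi_{j}<\oei_{j}-\oei_{j-1}\le\ei$, whence $e^{-(\ell_{1}^{k}-\oDi_{j})}\ge e^{-\ei}$ and therefore $\oRi_{j}\le\ei$. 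The bounds on $\uRi_{j}$ follow identically.

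For $j=d$ the first inequality in (\ref{import2}) is Jensen's inequality for the convex function $y\mapsto e^{-y}$: $\oRi_{d}=-\log\Exp_{0}[e^{-(\ell_{1}^{k}-\oDi_{d})}\,|\,z_{1}^{k}=d]\le\Exp_{0}[\ell_{1}^{k}-\oDi_{d}\,|\,z_{1}^{k}=d]$. To bound this conditional mean overshoot, I would use the level rule (\ref{eq:levels}), which gives $\Pro_{0}(z_{1}^{k}=d)=\tfrac{1}{d}\Pro_{0}(\ell_{1}^{k}\ge\oDi)$, to rewrite it as $\tfrac{d}{\Pro_{0}(\ell_{1}^{k}\ge\oDi)}\Exp_{0}[(\ell_{1}^{k}-\oDi_{d})^{+}\ind{z_{1}^{k}=d}]$, and then express the last expectation as the tail integral $\int_{\oei_{d-1}}^{\infty}\Pro_{0}(\ell_{1}^{k}>\oDi+x)\,dx$. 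Decomposing over the pre-exit position through the occupation measure $V(da):=\sum_{n\ge0}\Pro_{0}(\tau_{1}^{k}>n,\,u_{n}^{k}\in da)$ and conditioning on the fresh increment $u_{1}^{k}$ gives $\Pro_{0}(\ell_{1}^{k}>\oDi+x)=\int V(da)\,\Pro_{0}(u_{1}^{k}>\oDi-a+x)$; with $v:=\oDi-a$ and integrating in $x$ this becomes
\[
\Exp_{0}\big[(\ell_{1}^{k}-\oDi_{d})^{+}\ind{z_{1}^{k}=d}\big]=\int_{0}^{\oDi+\uDi} V(dv)\,\Exp_{0}\big[(u_{1}^{k}-v-\oei_{d-1})^{+}\big].
\]

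The crux---and the step I expect to be the main obstacle---is a uniform renewal bound: under $\Pro_{0}$ the walk $u^{k}$ has positive drift $I_{0}^{k}$, and its pre-exit occupation measure $V$ has a Lebesgue density bounded by $C/I_{0}^{k}$ for a constant $C$ independent of $d$ and bounded as $\oDi,\uDi\to\infty$ (a Blackwell/Lorden-type estimate, with the two-sided stopping only reducing occupation relative to the one-sided renewal measure; absolute continuity of $u_{1}^{k}$ is used here). Granting this, the displayed quantity is at most $\tfrac{C}{I_{0}^{k}}\int_{0}^{\infty}\Exp_{0}[(u_{1}^{k}-v-\oei_{d-1})^{+}]\,dv=\tfrac{C}{2I_{0}^{k}}\Exp_{0}[((u_{1}^{k}-\oei_{d-1})^{+})^{2}]\le\tfrac{C}{2I_{0}^{k}}\Exp_{0}[(u_{1}^{k})^{2}\ind{u_{1}^{k}\ge\oei_{d-1}}]$, which together with the factor $d/\Pro_{0}(\ell_{1}^{k}\ge\oDi)$ gives the claim with $\Theta(1)=C/(2I_{0}^{k}\Pro_{0}(\ell_{1}^{k}\ge\oDi))$. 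I would stress that the naive bound replacing $V$ by $\Exp_{0}[\tau_{1}^{k}]$ times the increment tail is \emph{too weak}, since $\Exp_{0}[\tau_{1}^{k}]\asymp\oDi/I_{0}^{k}$ diverges; controlling the occupation density near the boundary is precisely what keeps the constant bounded. The bound on $\uRi_{d}$ is symmetric, using the drift $-I_{\infty}^{k}$ under $\Pro_{\infty}$.
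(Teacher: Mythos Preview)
Your derivation of the representation $\oli_{j}=\oDi_{j}+\oRi_{j}$ via the change of measure, the positivity, the bound $\oRi_{j}\le\ei$ for $j\le d-1$, and the Jensen step for $j=d$ are all exactly what the paper does. The divergence is only in the second inequality of \eqref{import2}.

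The paper does not decompose through the occupation measure. Instead, after writing
\[
\Exp_{0}\big[(\ell_{1}^{k}-\oDi_{d})\ind{z_{1}^{k}=d}\big]
=\int_{\oei_{d-1}}^{\infty}\Pro_{0}(\ell_{1}^{k}>\oDi+x)\,dx
\le\int_{\oei_{d-1}}^{\infty}\Pro_{0}(\ell_{1}^{k}-\oDi>x\mid\ell_{1}^{k}\ge\oDi)\,dx,
\]
it invokes Lorden's pointwise overshoot tail bound \cite[Theorem~4, Eq.~(13)]{lorden}:
\[
\Pro_{0}(\ell_{1}^{k}-\oDi>x\mid\ell_{1}^{k}\ge\oDi)\le\frac{1}{I_{0}^{k}}\,\frac{\oDi+D}{\oDi+x}\,\Exp_{0}\big[(2u_{1}^{k}-x)\ind{u_{1}^{k}\ge x}\big]\le\Theta(1)\,\Exp_{0}\big[u_{1}^{k}\ind{u_{1}^{k}\ge x}\big],
\]
with $D=\Exp_{0}[((u_{1}^{k})^{+})^{2}]/I_{0}^{k}$, and then integrates in $x$ (Fubini) to get $\Theta(1)\,\Exp_{0}[(u_{1}^{k})^{2}\ind{u_{1}^{k}\ge\oei_{d-1}}]$. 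This is a one-line citation and delivers exactly the $\Theta(1)$ you want, with the boundedness in $\oDi,\uDi$ coming from the factor $(\oDi+D)/(\oDi+x)\le1+D/\oDi$.

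Your occupation-measure route is essentially a re-derivation of Lorden's inequality, and as written it has a small gap: the pre-exit occupation measure $V$ carries a unit atom at $a=0$ from the $n=0$ term, so it does not have a Lebesgue density. The atom's contribution $\Exp_{0}[(u_{1}^{k}-\oDi-\oei_{d-1})^{+}]$ can of course be absorbed separately, and the density bound for the $n\ge1$ part is precisely the renewal estimate underlying Lorden's proof; but rather than reproving this, just cite \cite{lorden} as the paper does.
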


\begin{proof}
The proof can be found in Appendix\,\ref{app:D}.
\end{proof}

Lemma \ref{lem:0} shows that, similarly to the thresholds $\{\oDi_{j},\uDi_{j}\}$ and $\{\oei_{j},\uei_{j}\}$, the 
log-likelihood ratios $\{\oli_{j},\uli_{j}\}$ can be computed off-line and efficiently if we simulate $(\tau_{1}^{k}, \ell_{1}^{k})$ under $\Pro_{0}$ and $\Pro_{\infty}$. Moreover, Lemma \ref{lem:0} shows that defining $\tilde{\ell}_{n}^{k}$ as the log-likelihood ratio of $z_{n}^{k}$ accounts for the unobserved overshoots at the fusion center. Specifically, when the fusion center receives message $z_{n}^{k}=j$ for some $j=1, \ldots, d$, it understands that $\ell_{n}^{k} \in [\oDi_{j}, \oDi_{j+1})$ and it approximates $\ell_{n}^{k}$ by $\oDi_{j}+ \oRi_{j}$; in other words, the fusion center approximates the random overshoot $\ell_{n}^{k}-\oDi_{j}$ that it does not observe by the constant $\oRi_{j}$, which is clearly an $\calo(1)$ term as $\oDi, \uDi \rightarrow \infty$.  

The following lemma is important for quantifying the additional detection delay due to using $\tilde{\ell}_{n}^{k}$ instead of the actual value of $\ell_{n}^{k}$ in (\ref{tilder}).

\begin{lemma} \label{lem:1}
If $\Exp_{i}[(u_{1}^{k})^{2}]<\infty$, $i=0, \infty$, then $\Exp_{0}[\ell^k_1- \tilde{\ell}^k_1 ] \leq 2\theta^{k}$, where
\begin{equation} \label{thetak}
\theta^{k}:= \epsilon^{k} + \Theta(1) \, \Exp_{0}[(u_{1}^{k})^{2} \ind{u_{1}^{k} \geq \oei_{d-1}}]+ \Theta(1) \, \Exp_{\infty}[(u_{1}^{k})^{2} \ind{-u_{1}^{k} \geq \uei_{d-1}}]
\end{equation}
and $\Theta(1)$ is a term that does not depend on $d$ and is bounded  from above and below as $\oDi, \uDi \rightarrow \infty$. Moreover, $\theta^{k} \rightarrow 0$ as $d \rightarrow \infty$.
\end{lemma}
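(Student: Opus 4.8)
The plan is to compute $\Exp_{0}[\ell^k_1 - \tilde{\ell}^k_1]$ directly by conditioning on the transmitted message $z_1^k$ and inserting the decomposition $\oli_{j} = \oDi_{j} + \oRi_{j}$, $\uli_{j} = \uDi_{j} + \uRi_{j}$ supplied by Lemma~\ref{lem:0}. Since $\tilde{\ell}^k_1 = \oli_{j}$ on $\{z_1^k = j\}$ and $\tilde{\ell}^k_1 = -\uli_{j}$ on $\{z_1^k = -j\}$ by \eqref{ells}, and since $\ell^k_1$ always exits $(-\uDi,\oDi)$ so that $\{z_1^k>0\}$ and $\{z_1^k<0\}$ partition the space, I would split
\[
\Exp_{0}[\ell^k_1 - \tilde{\ell}^k_1] = \sum_{j=1}^{d}\Exp_{0}\big[(\ell^k_1 - \oDi_{j} - \oRi_{j})\ind{z_1^k = j}\big] + \sum_{j=1}^{d}\Exp_{0}\big[(\ell^k_1 + \uDi_{j} + \uRi_{j})\ind{z_1^k = -j}\big],
\]
and bound each of the two sums separately by $\theta^{k}$, so that the total is at most $2\theta^{k}$.

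For the positive sum, on $\{z_1^k=j\}$ the overshoot $\ell^k_1 - \oDi_{j}$ is nonnegative, so discarding $-\oRi_{j}\le 0$ I would bound the $j$-th summand by the overshoot. For $1\le j\le d-1$ this overshoot lies in $[0,\oDi_{j+1}-\oDi_{j}) = [0,\oei_{j}-\oei_{j-1})$ and is therefore at most $\epsilon^{k}$, so these terms contribute at most $\epsilon^{k}\,\Pro_{0}(z_1^k>0)\le\epsilon^{k}$. For the terminal index $j=d$ I would keep the full overshoot and combine the first inequality of \eqref{import2}, namely $\Exp_{0}[\ell^k_1-\oDi_{d}\,|\,z_1^k=d]\le\Theta(1)\,d\,\Exp_{0}[(u_1^k)^2\ind{u_1^k\ge\oei_{d-1}}]$, with the equalization identity \eqref{eq:levels}, which gives $\Pro_{0}(z_1^k=d)=\tfrac1d\Pro_{0}(z_1^k>0)\le\tfrac1d$; the factors $d$ and $1/d$ cancel. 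Hence the positive sum is at most $\epsilon^{k}+\Theta(1)\Exp_{0}[(u_1^k)^2\ind{u_1^k\ge\oei_{d-1}}]\le\theta^{k}$.

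The negative sum is treated symmetrically and contains what I expect to be the only real obstacle. On $\{z_1^k=-j\}$ one has $\ell^k_1+\uDi_{j}\le 0$, so the summand is at most $\uRi_{j}$; for $1\le j\le d-1$ Lemma~\ref{lem:0} gives $\uRi_{j}\le\epsilon^{k}$ and these terms contribute at most $\epsilon^{k}\Pro_{0}(z_1^k<0)\le\epsilon^{k}$. The delicate term is $j=d$, where \eqref{import2} bounds $\uRi_{d}$ by $\Theta(1)\,d\,\Exp_{\infty}[(u_1^k)^2\ind{-u_1^k\ge\uei_{d-1}}]$, carrying a factor $d$, but the equalization property \eqref{eq:levels} for negative messages is stated under $\Pro_{\infty}$ and so does not directly control $\Pro_{0}(z_1^k=-d)$. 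I would bridge this gap by a change of measure: from \eqref{lambdas} and $\uli_{d}=\uDi_{d}+\uRi_{d}\ge 0$ we get $\Pro_{0}(z_1^k=-d)=e^{-\uli_{d}}\Pro_{\infty}(z_1^k=-d)\le\Pro_{\infty}(z_1^k=-d)=\tfrac1d\Pro_{\infty}(z_1^k<0)\le\tfrac1d$, again cancelling the factor $d$. Thus the negative sum is at most $\epsilon^{k}+\Theta(1)\Exp_{\infty}[(u_1^k)^2\ind{-u_1^k\ge\uei_{d-1}}]\le\theta^{k}$.

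Summing the two estimates gives $\Exp_{0}[\ell^k_1-\tilde{\ell}^k_1]\le 2\theta^{k}$. For the final assertion I would note that $\epsilon^{k}\to 0$ by \eqref{del}, while $\oei_{d-1},\uei_{d-1}\to\infty$ together with the hypothesis $\Exp_{i}[(u_1^k)^2]<\infty$ let dominated convergence send both tail expectations to $0$; since the $\Theta(1)$ prefactors remain bounded by Lemma~\ref{lem:0}, it follows that $\theta^{k}\to 0$ as $d\to\infty$.
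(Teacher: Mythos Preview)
Your proposal is correct and follows essentially the same route as the paper: the same pointwise decomposition via Lemma~\ref{lem:0}, the same bound $\ell^k_1+\uDi_j\le 0$ and $\uRi_j\le\epsilon^k$ on the negative side, the same change-of-measure argument to pass from $\Pro_0(z_1^k=-j)$ to $\Pro_\infty(z_1^k=-j)\le 1/d$, and the same appeal to \eqref{del} and finite second moments for $\theta^k\to 0$. The only cosmetic difference is that for the $j=d$ positive term you invoke \eqref{import2} and then multiply by $\Pro_0(z_1^k=d)\le 1/d$, whereas the paper goes directly through the unconditional bound (the intermediate inequality in the proof of Lemma~\ref{lem:0}); the two are equivalent.
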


\begin{proof}
The proof of this lemma can be found in Appendix\,\ref{app:D}.
\end{proof}

Note that an alternative approach would have been to define $\tilde{\ell}_{n}^{k}$ as in (\ref{ells}), 
but with  $\oli_{j}$ and $\uli_{j}$ replaced by $\oDi_{j}$ and $\uDi_{j}$, respectively. In this way, the overshoots are simply ignored by the fusion center.  However, the main reason for defining $\tilde{\ell}_{n}^{k}$ as the log-likelihood ratio of $z_{n}^{k}$ is that it allows us to prove the following lemma, which connects threshold $\tilde{\nu}$ with the false alarm period $\gamma$ and  plays a crucial role in establishing the (second-order) asymptotic optimality of the resulting detection rule. 

\begin{lemma} \label{lem3}
For any $\gamma>0$ we have  
$\tilde{\nu} \leq  \log \gamma - \log(\bar{I}_\infty)$, 
thus, $\tilde{\nu} =\log \gamma  +\Theta(1)$ as $\gamma \rightarrow \infty$.
\end{lemma}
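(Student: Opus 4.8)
My plan is to recast the statement as an average-run-length lower bound. First, since $\tilde{\cFt}\subseteq\cFt$, the rule $\dcus$ is an $\{\cFt\}$-stopping time, while under $\Pro_{\infty}$ the global process $\{u_{t}\}=\{\sum_{k=1}^{K}u_{t}^{k}\}$ is a random walk with mean increment $\Exp_{\infty}[u_{1}]=-\sum_{k}I_{\infty}^{k}=-K\bar{I}_{\infty}$. Assuming $\Exp_{\infty}[\dcus]<\infty$ (a false-alarm time of a CUSUM-type rule has light tails under $\Pro_{\infty}$), Wald's identity yields $\gamma=\Exp_{\infty}[-u_{\dcus}]=K\bar{I}_{\infty}\,\Exp_{\infty}[\dcus]$. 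Consequently the claimed bound is \emph{exactly} equivalent to the clean inequality $K\,\Exp_{\infty}[\dcus]\ge e^{\tilde{\nu}}$, in which the factor $\bar{I}_{\infty}$ no longer appears.

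To prove $K\,\Exp_{\infty}[\dcus]\ge e^{\tilde{\nu}}$ I would pass from physical time to the total number of transmitted messages $N:=\sum_{k=1}^{K}m_{\dcus}^{k}$. Because every inter-communication interval $\tau_{n}^{k}-\tau_{n-1}^{k}$ is at least one time unit, each sensor emits at most $\dcus$ messages by time $\dcus$, so $N\le K\dcus$ and it suffices to establish the familiar CUSUM false-alarm bound $\Exp_{\infty}[N]\ge e^{\tilde{\nu}}$. Here the design choice (\ref{ells})--(\ref{lambdas}) is precisely what makes the constant come out right: because $\tilde{\ell}_{n}^{k}$ is the genuine log-likelihood ratio of the message $z_{n}^{k}$, each increment obeys $\Exp_{\infty}[e^{\tilde{\ell}_{n}^{k}}]=1$, so $\{e^{\tilde{u}_{t}}\}$ behaves as the likelihood ratio of the transmitted data. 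I would then run the standard excursion argument on the fusion CUSUM $\tilde{y}$: between consecutive resets to $0$, optional stopping on $e^{\tilde{u}}$ bounds the conditional probability that a fresh excursion ever reaches $\tilde{\nu}$ by $e^{-\tilde{\nu}}$, so the number of excursions before a false alarm stochastically dominates a geometric variable of mean $e^{\tilde{\nu}}$; since each excursion consumes at least one message, $\Exp_{\infty}[N]\ge e^{\tilde{\nu}}$. Equivalently one may use the Shiryaev--Roberts statistic $R_{i}$, noting $e^{\tilde{\nu}}\le e^{\tilde{y}_{\dcus}}\le R_{N}$ and that $R_{i}-i$ is a $\Pro_{\infty}$-martingale.

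The main obstacle is making this martingale/optional-stopping step legitimate in the decentralized, asynchronous setting, and this is where I would spend the real effort. The messages reaching the fusion center form a superposition of the $K$ sensors' renewal streams handled in order of arrival; within any one sensor the exit time $\tau_{n}^{k}$ and the message value $z_{n}^{k}$ are dependent, so conditioning on which sensor communicates next and on the elapsed portions of the in-progress excursions biases the law of the next message and breaks the naive identity $\Exp_{\infty}[e^{\tilde{\ell}}\mid\text{arrival-ordered past}]=1$. I would circumvent this by choosing the filtration so that each step reveals a \emph{completed} excursion (its transmission time together with its message) rather than conditioning on partial excursions: with respect to the per-sensor completed-excursion filtrations the increments $\tilde{\ell}_{n}^{k}$ are bona fide martingale differences, since the $n$-th excursion of sensor $k$ is independent of all other excursions and of their arrival times. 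This restores $\Exp_{\infty}[e^{\tilde{\ell}}\mid\cdot]=1$, validates the single-excursion crossing bound (equivalently the Shiryaev--Roberts supermartingale property), and delivers $\Exp_{\infty}[N]\ge e^{\tilde{\nu}}$. Combining with $N\le K\dcus$ and the Wald identity gives $\gamma\ge\bar{I}_{\infty}\,e^{\tilde{\nu}}$, i.e. $\tilde{\nu}\le\log\gamma-\log(\bar{I}_{\infty})$; as $\bar{I}_{\infty}$ is a fixed positive constant this is $\tilde{\nu}=\log\gamma+\Theta(1)$ as $\gamma\to\infty$.
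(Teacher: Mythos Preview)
Your overall strategy coincides with the paper's: apply Wald's identity to get $\gamma=K\bar{I}_{\infty}\,\Exp_{\infty}[\dcus]$, use $\tilde{\cN}\le K\dcus$ (where $\tilde{\cN}$ is the total number of received messages), and then prove $\Exp_{\infty}[\tilde{\cN}]\ge e^{\tilde{\nu}}$ via an excursion/geometric argument on the fusion CUSUM. You also correctly isolate the real difficulty, namely that in the arrival-ordered stream the identity $\Exp_{\infty}[e^{\tilde{\ell}}\mid\text{past}]=1$ can fail because the sensor label and the message value are entangled through the race of exit times.

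The gap is in your proposed fix. Revealing the transmission time together with the message does \emph{not} restore the martingale property you need: given the past arrivals (with or without times), the identity of the next transmitting sensor is determined by which in-progress excursion finishes first, and conditioning on ``sensor $k$ wins'' biases the distribution of $z^{k}$ away from its marginal whenever the exit time and the exit side are dependent (which they generally are). So neither $\Exp_{\infty}[e^{\tilde{\ell}_{n+1}}\mid\cC_{n}]=1$ nor the Shiryaev--Roberts martingale property $\Exp_{\infty}[R_{n+1}-R_{n}\mid\cC_{n}]=1$ follows from your filtration choice, and invoking ``per-sensor'' filtrations separately does not control the interleaved statistic $\tilde{y}$ that actually triggers the alarm.

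The paper resolves this with a specific device you are missing. Working with the arrival-ordered filtration $\cC_{n}=\sigma((z_{1},k_{1}),\ldots,(z_{n},k_{n}))$, it factors the full $\cC_{n}$-likelihood ratio as $\ccL_{n}=e^{\phi_{n}+v_{n}}$, where $\phi_{n}$ is the log-likelihood of the label sequence $(k_{1},\ldots,k_{n})$ and $v_{n}$ is the conditional log-likelihood of the messages given the labels; by construction $v_{n}$ coincides with the fusion statistic $\tilde{u}$ at the $n$-th arrival. In the excursion argument one changes measure to $\Pro_{0}$ via the genuine likelihood $\ccL_{n}$, picking up an extra factor $e^{-(\phi_{n_{j}}-\phi_{n_{j-1}})}$ beyond the desired $e^{-(v_{n_{j}}-v_{n_{j-1}})}\le e^{-\tilde{\nu}}$. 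The point is that $\{e^{-\phi_{n}}\}$ is itself a $(\Pro_{0},\cC_{n})$-supermartingale (as a likelihood ratio), so optional sampling gives $\Exp_{0}[e^{-(\phi_{n_{j}}-\phi_{n_{j-1}})}\mid\cC_{n_{j-1}}]\le 1$, which eliminates the nuisance factor and yields $\Pro_{\infty}(\cR>j)\ge(1-e^{-\tilde{\nu}})\Pro_{\infty}(\cR>j-1)$. This label/message decomposition is the concrete mechanism that your sketch lacks; once you have it, the rest of your plan goes through verbatim.
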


\begin{proof}
The proof is presented in Appendix \ref{app:C}.
\end{proof}

It is possible to prove Lemma \ref{lem3} and, consequently, to establish the asymptotic optimality of $\tilde{S}$ if  $\tilde{\ell}_{n}^{k}$ is defined as 
the  log-likelihood ratio of the pair $(\tau_{n}^k-\tau_{n-1}^{k}, z_{n}^{k})$, and not  only of $z_{n}^{k}$. Unfortunately,  the distribution of  $\tau_{1}^k$  is typically intractable, thus, the resulting rule could not be implemented in practice.

We are now ready to state the discrete-time analogue of Theorem \ref{prop1}. For simplicity, we assume that  communication rates, before and after the change, are of the same order of magnitude for all sensors,  i.e., there is a quantity $\D$ so that  $\oDi,\uDi=\Theta(\D)$ as  $\D,\oDi,\uDi \rightarrow \infty$ for all $1\leq k \leq K$. Moreover, we set $\theta:=\max_{1 \leq k \leq K} \theta^{k}$.

\begin{theorem} \label{th:2} 
If $\Exp_{0}[(u_{1}^{k})^{2}]<\infty$ for every $1  \leq k \leq K$, then 
\begin{equation} \label{order1cd}
\cJ[\dcus] -\cJ[\cus] \leq \frac{\theta}{\Theta(\D)} \, \log\gamma  + K \, \Theta(\D).
\end{equation}
\end{theorem}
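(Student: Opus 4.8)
The plan is to reduce $\cJ[\dcus]$ to the single quantity $\Exp_0[u_{\dcus}]$ and then to control the discrepancy between the true log-likelihood $u$ and its transmitted approximation $\tilde u$ through a chain of optional-stopping (Wald) identities, feeding in Lemmas~\ref{lem:6}, \ref{lem:1} and~\ref{lem3}. First I would argue, as for the centralized CUSUM, that the running-minimum structure of $\tilde y$ makes the change-time $\tau=0$ with $\tilde y_0=0$ essentially the worst case: starting the fusion statistic from any $\tilde y_\tau\ge 0$ only shortens the residual delay, while the local log-likelihoods pending at the sensors at the change-time can inflate it by at most $\oDi+\uDi=\Theta(\D)$ per sensor. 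This yields $\cJ[\dcus]\le\Exp_0[u_{\dcus}]+K\,\Theta(\D)$, so that, together with Lemma~\ref{lem:6} (which gives $\cJ[\cus]\ge\log\gamma-\Theta(K)$), the theorem reduces to the bound $\Exp_0[u_{\dcus}]\le\log\gamma+\frac{\theta}{\Theta(\D)}\log\gamma+K\,\Theta(\D)$.

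Next, since $\dcus$ occurs at a communication instant $\tau_N^{k^\ast}$, I would start from the decomposition
\begin{equation*}
u_{\dcus}=\tilde u_{\dcus}+\sum_{k=1}^{K}\sum_{n=1}^{m_{\dcus}^k}\bigl(\ell_n^k-\tilde\ell_n^k\bigr)+\sum_{k=1}^{K}\bigl(u_{\dcus}^k-u^k_{\tau^k(\dcus)}\bigr).
\end{equation*}
The last ``pending'' sum is bounded in expectation by $K\,\Theta(\D)$, because for $k\ne k^\ast$ the local log-likelihood accumulated since the previous communication still lies in the continuation region $(-\uDi,\oDi)$, while the $k^\ast$-term vanishes. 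For the leading term I would use that the running minimum of $\tilde u$ is nonpositive, so $\tilde u_{\dcus}\le\tilde y_{\dcus}$, and that $\tilde y$ overshoots $\tilde\nu$ by at most the triggering jump $\tilde\ell_N^{k^\ast}$; a renewal/second-moment estimate then gives $\Exp_0[\tilde u_{\dcus}]\le\tilde\nu+\Theta(\D)$, and Lemma~\ref{lem3} turns $\tilde\nu$ into $\log\gamma+\Theta(1)$.

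The two randomly indexed sums are then handled by optional stopping. Under $\Pro_0$ the triplets $(\ell_n^k,\tilde\ell_n^k)_{n}$ are i.i.d.\ within each sensor, so the compensated processes $\sum_{n\le m_t^k}(\tilde\ell_n^k-\Exp_0[\tilde\ell_1^k])$ and $\sum_{n\le m_t^k}(\ell_n^k-\tilde\ell_n^k-\Exp_0[\ell_1^k-\tilde\ell_1^k])$ are $\{\tilde{\cFt}\}$-martingales, and evaluating them at the $\{\tilde{\cFt}\}$-stopping time $\dcus$ yields $\Exp_0[\tilde u_{\dcus}]=\sum_k\Exp_0[m_{\dcus}^k]\,\Exp_0[\tilde\ell_1^k]$ and $\Exp_0\bigl[\sum_k\sum_{n\le m_{\dcus}^k}(\ell_n^k-\tilde\ell_n^k)\bigr]=\sum_k\Exp_0[m_{\dcus}^k]\,\Exp_0[\ell_1^k-\tilde\ell_1^k]$. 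Bounding the per-message error by $\Exp_0[\ell_1^k-\tilde\ell_1^k]\le 2\theta$ (Lemma~\ref{lem:1}) and using the uniform lower bound $\Exp_0[\tilde\ell_1^k]=\Theta(\D)$ --- valid because $\Exp_0[\ell_1^k]=\Theta(\D)$ and $\Exp_0[\ell_1^k-\tilde\ell_1^k]\le 2\theta=\calo(1)$ --- I would obtain $\sum_k\Exp_0[m_{\dcus}^k]\le\Exp_0[\tilde u_{\dcus}]/\Theta(\D)\le(\log\gamma+\Theta(\D))/\Theta(\D)$, so the correction sum is at most $\frac{\theta}{\Theta(\D)}\log\gamma+\calo(\theta)$. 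Adding the three contributions and absorbing the lower-order $\Theta(1)$, $\Theta(\D)$ and $\Theta(K)$ terms into $K\,\Theta(\D)$ gives~\eqref{order1cd}.

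The main obstacle I expect is the rigorous use of optional stopping for these counting-process-indexed sums: $m_{\dcus}^k$ is not a stopping time of sensor $k$'s own message filtration, so one must check that the compensated sums really are $\{\tilde{\cFt}\}$-martingales and establish the integrability ($\Exp_0[m_{\dcus}^k]<\infty$ together with a domination/uniform-integrability condition) required to stop them at $\dcus$. The second delicate point is bounding the expected overshoot $\Exp_0[\tilde\ell_N^{k^\ast}]$ --- the triggering message can carry a large log-likelihood when its mark is $j=d$ --- by $\Theta(\D)$; this is exactly where the hypothesis $\Exp_0[(u_1^k)^2]<\infty$ enters.
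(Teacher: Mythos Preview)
Your proposal is correct and follows the same route as the paper: reduce $\cJ[\dcus]$ to $\Exp_0[u_{\dcus}]$, split $u_{\dcus}=\tilde u_{\dcus}+(u_{\dcus}-\tilde u_{\dcus})$, bound the first piece via Lemma~\ref{lem3} plus an overshoot term and the second via asynchronous Wald identities (packaged in the paper as Lemma~\ref{lem:4}, taken from~\cite{fel}) together with Lemmas~\ref{lem:1},~\ref{lem:7} and~\ref{lem:6}. Two minor places where the paper is cleaner: it obtains the \emph{exact} equality $\cJ[\dcus]=\Exp_0[u_{\dcus}]$ from pathwise monotonicity of $\tilde y_t$ in its starting value (so no ``pending log-likelihood'' slack is needed in that step), and it bounds the overshoot $\tilde y_{\dcus}-\tilde\nu$ \emph{deterministically} by $K\Lambda_{\max}=K\Theta(\D)$ --- in discrete time several sensors may transmit at the same instant, so your single-jump bound is not literally valid, but the deterministic bound sidesteps your renewal/second-moment estimate entirely.
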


\begin{proof}
For the optimum CUSUM $\cS$, it is well known that $\cJ[\cS]=\Exp_0[u_{\cS}]$. In order to see that this is also the case for D-CUSUM, i.e., 
$\cJ[\tilde{\cS}]=\Exp_0[u_{\tilde{\cS}}]$, from the nonnegativity of the KL-divergence it is clear that it suffices to show that 
$\tilde{S}\ind{\tilde{S} \geq \tau}= \inf\{t \geq\tau: \tilde{y}_{t} \geq \tilde{\nu}\}$ is \textit{pathwise} decreasing with respect to $\tilde{y}_\tau$, 
or equivalently that the process $\{\tilde{y}_t, t>\tau\}$ is \textit{pathwise} increasing with respect to $\tilde{y}_\tau$. Indeed, if we denote by ($\tau_{n})$ the sequence of times at which there is a communication from at least one sensor, then  
$$
\tilde{y}_{\tau_n}=(\tilde{y}_{\tau_n-})^++\omega_{\tau_n}
$$
where $\omega_{\tau_n}$ is information coming from the sensors that communicate at time $\tau_n$ and is clearly independent from the past. This implies that $\tilde{y}_t$ will be increasing in $(\tilde{y}_\tau)^+$ for any $t \geq \tau$ and our claim follows because the smallest value of the latter quantity is 0.

Based on the above, we can write 
\begin{equation}
\cJ[\tilde{\cS}]-\cJ[\cS]=\Exp_0[u_{\tilde{\cS}}]-\Exp_0[u_{\cS}]=\Exp_0[u_{\tilde{\cS}}-\tilde{u}_{\tilde{\cS}}]
+\Exp_0[\tilde{u}_{\tilde{\cS}}]-\Exp_0[u_{\cS}].
\label{eq:3-part}
\end{equation}
From Lemma\,\ref{lem:8} we have that $\Exp_0[\tilde{u}_{\tilde{\cS}}] \leq \log \gamma +K \Theta(\D)$ and 
$$\Exp_0[u_{\tilde{\cS}}-\tilde{u}_{\tilde{\cS}}] \leq K \Theta (\D)+ \theta \, \frac{\log \gamma}{\Theta(\D)}.$$
Applying these inequalities and Lemma\,\ref{lem:6} to \eqref{eq:3-part}, we obtain the desired result. Lemma \,\ref{lem:8}, as well as some additional auxiliary results, are stated and proved in  Appendix \ref{app:E}.
\end{proof}

The main consequence  of Theorem \ref{th:2} is that D-CUSUM is second-order asymptotically optimal, i.e.,  $\cJ[\dcus] -\cJ[\cus]= \calo(1)$, when $K=\calo(1)$,
$\D=\calo(1)$ and $\theta \rightarrow 0$ so that $\theta \log\gamma=\calo(1)$ as $\gamma \rightarrow \infty$.  We have seen in Lemma \ref{lem:1} that $\theta \rightarrow 0$ as $d \rightarrow \infty$. If, in particular,  $\theta= \calo(1/d^{\alpha})$, where $\alpha$ is some positive constant, then the above analysis implies that $d$ may go to infinity with a rate as low as $\calo((\log \gamma)^{1/\alpha})$ and, as a result, the required number of bits per transmission, $1+ \lceil \log_{2} d \rceil$, can be of an order as low as 
$\calo(\frac{1}{\alpha} \log \log \gamma)$. This means that second-order asymptotic optimality is achieved in practice with a very low number of bits per transmission, a conclusion that will also be supported by some simulation experiments in the end of this section.

As in continuous time, second-order asymptotic  optimality is not preserved with an asymptotically low-rate of communication ($\D \rightarrow \infty$).
However, from Theorem\,\ref{th:2}  and Lemma\,\ref{lem:6} we have
\begin{equation} \label{asy2}
\frac{\cJ[\dcus]}{\cJ[\cus]} =1+ \frac{\cJ[\dcus] -\cJ[\cus]}{\cJ[\cus]} \leq 1+ \frac{ \frac{\theta}{\Theta(\D)} +  \frac{K \Theta(\D)}{\log\gamma}}{1- \frac{\Theta(K)}{\log \gamma}},
\end{equation}
which implies that D-CUSUM is first-order asymptotically optimal,  i.e.,  $\cJ[\dcus]/\cJ[\cus] \rightarrow 1$, when $\D \rightarrow \infty$ so that  $K \D=o(\log \gamma)$. In this context, the performance of D-CUSUM is optimized  when $\D, \theta, K$ are selected so that the two terms in the upper bound of \eqref{order1cd} are of the same order magnitude. This happens when $\D=\Theta(\sqrt{\theta \log\gamma /K})$, in which case $\cJ[\dcus] -\cJ[\cus]= \calo(\sqrt{K \, \theta  \, \log\gamma})$. 

We should emphasize that in the case of a binary alphabet ($d=1$), where $\theta$ is bounded away from 0 (i.e., $\theta=\Theta(1)$),  
first-order asymptotic optimality cannot be achieved with a fixed rate of communication, i.e., when $\D=\calo(1)$ as $\gamma \rightarrow \infty$.  This may seem counterintuitive at first, however it is quite reasonable since a high rate of communication leads to fast accumulation of quantization error. Nevertheless, 
this source of error can be suppressed if we have a sufficiently large alphabet size that allows us to quantize the overshoots. This explains why first-order asymptotic optimality can be achieved even with $\D=\calo(1)$ when $\theta \rightarrow 0$. 


We conclude that, either with a high or a low communication rate, the performance of D-CUSUM is improved with a larger than binary alphabet $(d>1)$, but in practice  a small value of $d$ should be sufficient. In order to elaborate more on this point, let us note that the statistical behavior of the overshoots depends on the  parameter $\D$, which controls the average period of communication in the sensors. However, this dependence is only minor since the distribution of the overshoots converges to some limiting distribution as $\D$ becomes large. In other words, quantizing the overshoots is like quantizing a random variable with (almost) fixed statistics. Consequently, the mean square quantization error, or any other similar quality measure,  will be (almost) independent from $\D$ for fixed number of bits.

On the contrary,  for the classical quantization scheme \eqref{zdd}, employed by Q-CUSUM, quantization is applied on the value of each $u^k_{nr}-u^k_{(n-1)r}$, where $r$ denotes  the fixed corresponding period. It is very simple to realize that for fixed number of bits, if we increase the period $r$, the mean square quantization error will \textit{increase}, since the difference $u^k_{nr}-u^k_{(n-1)r}$ will involve a larger sum of i.i.d.~random variables. This becomes particularly obvious when these random variables are bounded, in which case the support of the sum increases linearly with $r$ and we are asked, with the same number of bits, to quantize a larger range of values. This suggests that if we want to communicate with the fusion center at a smaller rate and preserve the same number of bits, this will inflict larger quantization errors and therefore additional performance degradation for Q-CUSUM. As we mentioned above, 
this is not the case with the quantization scheme we adopt for D-CUSUM, since increasing $\D$ (to reduce the communication rate) leaves the mean square quantization error almost intact.

Let us now illustrate these conclusions with a simulation study. Specifically, suppose that each sensor $k$ takes independent, normally distributed observations with variance $1$ and mean that changes from $0$ to $\mu$,  i.e., $\xi_{t}^k \sim \cN(0,1)$ when $t \leq \tau$ and $\xi_{t}^k \sim \cN(\mu,1)$ when $t > \tau$. Then, for every $t \in \mathbb{N}$ we have $u_{t}^k-u_{t-1}^{k} =  \mu \, \xi_{t}^k-  \mu^{2}/2$. 
We assume that $\oDi=\uDi=\Di$ and for every $j=1, \ldots, d-1$ we set $\oei_{j}=\uei_{j}=\ei_{j}$ and, consequently, we have $\oli_{j}=\uli_{j}=\li_{j}$. Moreover, we assume that each $\D^{k}$ is chosen so that $\Exp_{0}[\tau_{1}^{k}]=r$. In  Table\,\ref{tab:1}  we present the  values of these parameters when the number of transmitted bits per message is $d=1$ or $d=2$, the communication period is $r=3$ or $r=6$ and  $\mu=1$.

\begin{table}[!h]
	\centering
		\caption{Thresholds and Log-Likelihood Ratios}
\begin{tabular}{cc}
		\begin{tabular}{|c||c|c|c|c|} \hline
	   	                    & $\Di_{1}$  & $\li_{1}$            \\ \hline \hline
		$r=3$, $\mu=1$    & 1.287    & 1.87    \\ \hline
	  $r=6$, $\mu=1$    & 2.54    & 3.12    \\ \hline
	\end{tabular}

&

		\begin{tabular}{|c||c|c|c|c|}\hline
	   	                    & $\Di_{1}$ & $\Di_{2}$ & $\li_{1}$ & $\li_{2}$            \\ \hline  \hline
		$r=3$, $\mu=1$    & 1.287    & 1.87      & 1.54     & 2.94    \\ \hline
	  $r=6$, $\mu=1$    & 2.54    & 3.12      & 2.80     & 3.62     \\ \hline
	\end{tabular}\\
	&\\
(a) $\;d=1$ &	(b) $\;d=2$
	
	\end{tabular}
	\label{tab:1}

	\end{table}

Our goal is to compare  D-CUSUM $\tilde{\cS}$ with  Q-CUSUM $\hat{\cS}$, which was defined in (\ref{eq:cus2}), when both rules use the 
same resources, i.e., the same  number of bits per communication and the same (average) rate of communication. 
Note that such a fair comparison is not possible with decentralized rules that do not explicitly control their transmission rate. 
Of course, the ultimate benchmark is the centralized CUSUM test, which requires transmission of the observation of each sensor at every time $t$.

\begin{figure}[!ht]
\centering
\includegraphics{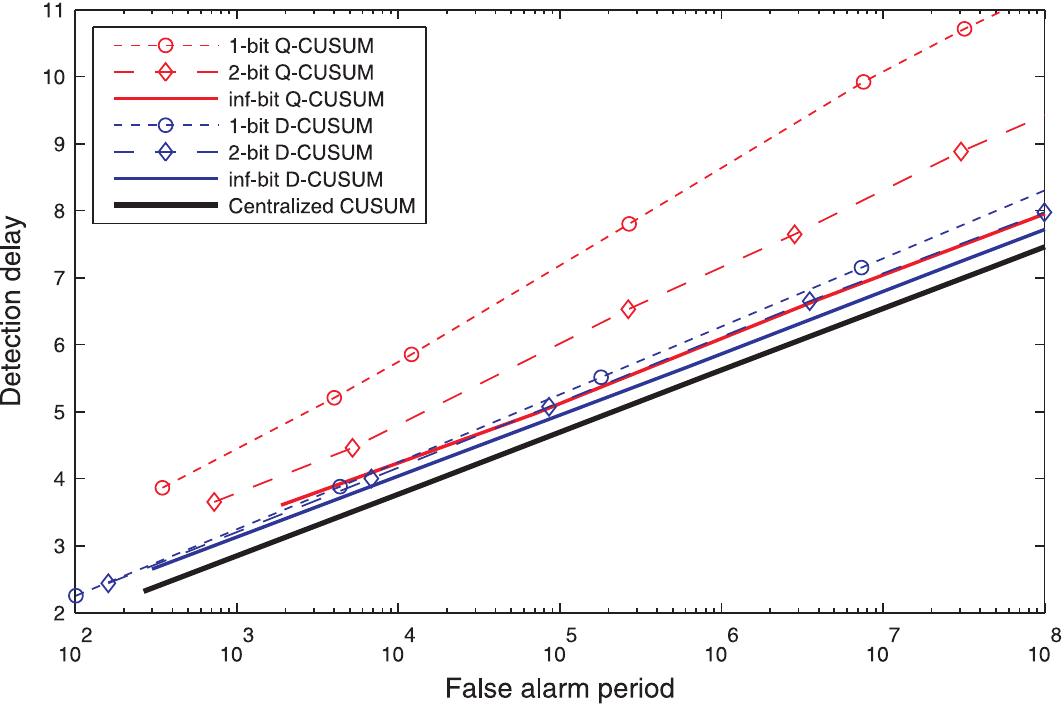} 
\caption{Case of $K=5$ sensors with communication period $r=3$.}
\label{fig:1}
\vskip0.5cm
\centering
\includegraphics{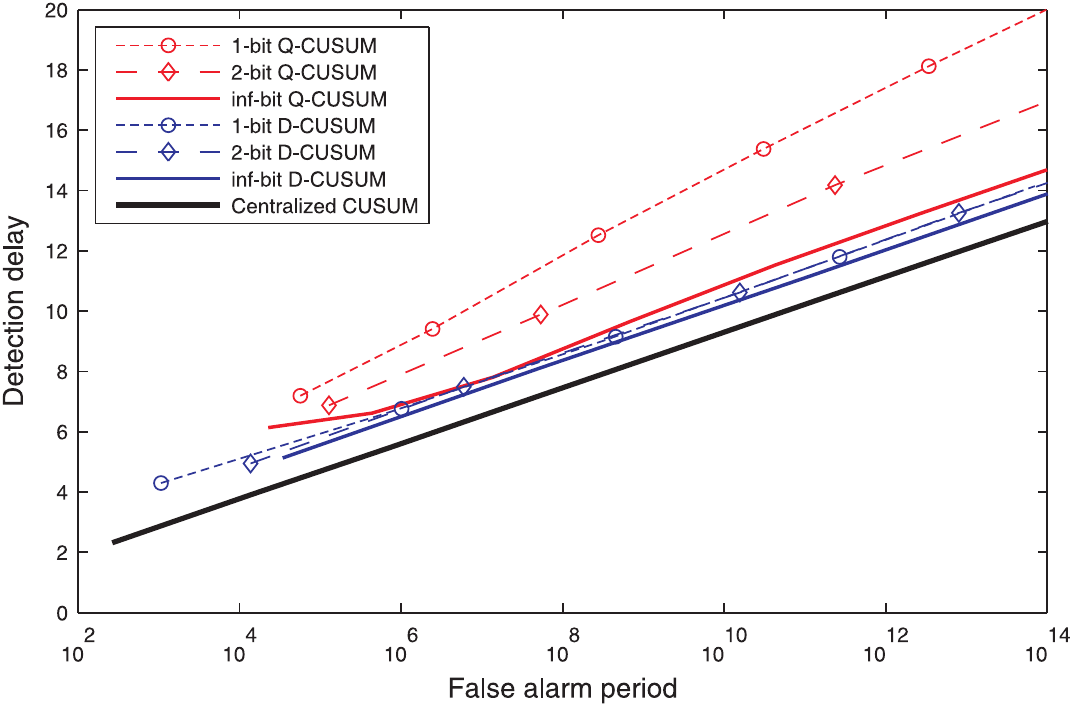} 
\caption{Case of $K=5$ sensors with communication period $r=6$.}
\label{fig:2}
\end{figure}

Fig.\,\ref{fig:1} and Fig.\,\ref{fig:2} depict the main results of our simulations. First of all, we observe that in all cases the operating characteristic curve of D-CUSUM $\dcus$ is essentially parallel to that of the optimal centralized CUSUM, $\cS$. This is exactly the \textit{second-order} asymptotic optimality that we established theoretically. On the contrary, the operating characteristic curve of Q-CUSUM $\hat{\cS}$ diverges as $\gamma$ increases,  as expected, 
since this  not an asymptotically optimal scheme (even of first order). 

Of course, when an ``infinite-bit'' message is transmitted at each communication time, Q-CUSUM corresponds to the centralized CUSUM with period $r$ and its
operating characteristic curve is parallel  to the optimal one. However, what is really interesting is that D-CUSUM with one-bit or two-bit transmissions 
is either very close or even outperforms this  \textit{infinite-bit} Q-CUSUM.

Finally, we should also note that when the average communication period is small ($r=3$), there is a considerable improvement in D-CUSUM when using two, instead of one, bits per transmission (see  Fig.\,\ref{fig:1}). On the other hand, when the average communication period is large ($r=6$), we do not observe similar performance gains for D-CUSUM by having the sensors transmit additional bits to the fusion center (see Fig.\,\ref{fig:2}).

\section{Conclusions} 
The main contribution of this paper is a novel decentralized sequential detection rule, that we called D-CUSUM, according to which each sensor communicates with the fusion center at two-sided exit times of its local log-likelihood ratio and the fusion center  uses in parallel a CUSUM-like rule in order to detect the change. We showed that the performance loss of D-CUSUM with respect to the optimal centralized CUSUM is bounded as the rate of false alarms goes to 0 (\textit{second order} asymptotic optimality). Moreover, we showed that  its first-order asymptotic optimality is preserved even with an asymptotically low communication rate and  large number of sensors. We illustrated  these properties with simulation experiments, which also showed that D-CUSUM performs significantly better than a CUSUM-based, decentralized detection rule that requires communication at deterministic times. 

We assumed throughout the paper that observations from different sensors are independent, an assumption which is not needed for the optimality of the centralized CUSUM test, but is universal in the decentralized literature. This assumption is necessary both for the design and the analysis of D-CUSUM in discrete time, however it is possible to remove it in continuous time, at least when the sensors observe \textit{correlated} Brownian motions. Indeed, going over the proof of Theorem \ref{prop1} in Appendix\,\ref{app:A}, we realize  that this assumption is needed only to the extent that it guarantees a decomposition of the form $u_{t}= \sum_{k=1}^{K} u_{t}^k$, where $\{u^k_{t}\}$ is an $\cFt^k$-adapted process with continuous paths. That is, we did not use explicitly the fact that $\{u^k_{t}\}$ is the local log-likelihood ratio at sensor $k$. This implies that Theorem \ref{prop1} will remain valid even for sensors with correlated dynamics, as long as such a decomposition is possible. This is indeed the case when the sensors observe correlated Brownian motions before and after the change, i.e., for every $1 \leq k \leq K$ it is
$$ \xi^k_{t}=   \sum_{j=1}^{K} \sigma_{kj} W^{j}_{t} + \ind{t > \tau} \, \mu^k t, ~ t \geq 0,$$
where  $(W^{1}, \ldots,W^{K})$ is a standard $K$-dimensional Wiener process, $\mu=[\mu^{1}, \ldots, \mu^{K}]'$ a $K$-dimensional real vector 
and $\sigma:=[\sigma_{ij}]$ a square matrix of dimension $K$ so that the diffusion coefficient matrix ${\mit\Sigma}= \sigma \sigma'$ is invertible. 
Then, we can write $u_{t}= \sum_{k=1}^{K} [ b^k \, \xi_{t}^k - 0.5 \, \mu^k \, b^k \, t]$,
where $b=[b^1,\ldots,b^K]'= {\mit\Sigma}^{-1} \mu$, and Theorem\,\ref{prop1} remains valid as long as we define $u^{k}_{t}$ in (\ref{taucd})  
not as the local log-likelihood ratio $\mu^k \, \xi_{t}^k - 0.5 \, (\mu^k)^{2}  \, t$, but as  $b^k  \xi_{t}^k - 0.5 \, \mu^k b^k t$. However, 
it  remains an open problem to establish asymptotically optimal, decentralized detection rules for more general continuous-time models, and of course in the i.i.d. setup, when the sensor observations are correlated.

\appendix

\section{}\label{app:A}
In this Appendix, we focus on the continuous-time setup of Subsection \ref{sec:D-CUSUM-cont} and we note that 
$$\Exp_{0}[u_{T}]= \Exp_{0}[\langle u \rangle_{T}] ,~  \Exp_{\infty}[-u_{T}]= \Exp_{\infty}[\langle u \rangle_{T}]$$ 
for any stopping time $T$ for which the above quantities are finite. Moreover, for any $x>0$ we use the following notation 
$$\cus_{x}= \inf \{ t \geq 0: y_{t}  \geq x  \}  ,~  \dcus_{x}= \inf \{ t \geq 0: \tilde{y}_{t} \geq x \}.$$
Then, thresholds $\nu$ and $\tilde{\nu}$  are  chosen so that $\Exp_{\infty}[-u_{\cus_{\nu}}]= \gamma=\Exp_{\infty}[-u_{\dcus_{\tilde{\nu}}}]$, or equivalently,
\begin{equation} \label{lop}
\Exp_{\infty}[\langle u \rangle _{\cus_{\nu}}]= \gamma=\Exp_{\infty}[\langle u \rangle _{\dcus_{\tilde{\nu}}}].
\end{equation}
 The proof of Theorem\,\ref{prop1} is based on the following lemma, for which we set $C:=K \D_{\max}$, where $\D_{\max}:=\max_{1 \leq k \leq K} \max\{\oDi, \uDi\}$.

\begin{lemma} \label{lem}
For any $\gamma>0$ \\ 
(i) $\cS_{\tilde{\nu} -2 C} \leq \dcus_{\tilde{\nu}} \leq \cS_{\tilde{\nu}+ 2 C}$ $\Pro_{0}, \Pro_{\infty}$-\text{a.s.} \quad 
(ii) $|\nu-\tilde{\nu}| \leq 2 C$.
\end{lemma}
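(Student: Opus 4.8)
The plan is to reduce both parts to a single uniform, pathwise comparison of the reconstructed statistic $\tilde u_t$ with the true log-likelihood ratio $u_t$, and then read off everything. First I would show that $\sup_{t\ge 0}|u_t-\tilde u_t|\le C$ almost surely under both $\Pro_0$ and $\Pro_\infty$. Writing $u_t-\tilde u_t=\sum_{k=1}^K(u_t^k-\tilde u_t^k)$, it suffices to control each summand. By \eqref{freecd}, $\tilde u_t^k=u^k_{\tau^k(t)}$ is the value of $u^k$ at its most recent communication time, so for $t\in[\tau_n^k,\tau_{n+1}^k)$ we have $\tilde u_t^k=u^k_{\tau_n^k}$, while the exit-time definition \eqref{taucd} forces $u_t^k-u^k_{\tau_n^k}\in(-\uDi,\oDi)$. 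Hence $|u_t^k-\tilde u_t^k|\le\max\{\oDi,\uDi\}\le\D_{\max}$, and summing over $k$ gives $|u_t-\tilde u_t|\le K\D_{\max}=C$ for every $t$. (Continuity of the paths of $u^k$ is what makes each $\ell_n^k$ exactly $\oDi$ or $-\uDi$, so that \eqref{freecd} reproduces $u^k$ exactly at communication times; this is precisely what fails in the discrete-time setup.)

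Next I would propagate this estimate to the two CUSUM statistics and deduce part (i). Because $u$ has continuous paths, $\inf_{0\le s<t}u_s=\inf_{0\le s\le t}u_s$, and the bound of the previous step gives $|\inf_{0\le s\le t}\tilde u_s-\inf_{0\le s\le t}u_s|\le C$. Combining,
$$
|y_t-\tilde y_t|\le |u_t-\tilde u_t|+\Big|\inf_{0\le s<t}u_s-\inf_{0\le s\le t}\tilde u_s\Big|\le 2C\qquad\text{for all }t,
$$
almost surely. Part (i) is then immediate: at $t=\cS_{\tilde\nu+2C}$ we have $\tilde y_t\ge y_t-2C\ge\tilde\nu$, whence $\dcus_{\tilde\nu}\le\cS_{\tilde\nu+2C}$; symmetrically, at $t=\dcus_{\tilde\nu}$ we have $y_t\ge\tilde y_t-2C\ge\tilde\nu-2C$, whence $\cS_{\tilde\nu-2C}\le\dcus_{\tilde\nu}$.

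For part (ii) I would combine (i) with the calibration \eqref{lop} and the closed form \eqref{fapito}. Since $\langle u\rangle$ is nondecreasing, part (i) yields $\Exp_\infty[\langle u\rangle_{\cS_{\tilde\nu-2C}}]\le\gamma\le\Exp_\infty[\langle u\rangle_{\cS_{\tilde\nu+2C}}]$. Writing $g(x):=\Exp_\infty[\langle u\rangle_{\cS_x}]=e^x-x-1$ from \eqref{fapito}, and noting that $g$ is strictly increasing on $(0,\infty)$ with $g(\nu)=\gamma$, the chain $g(\tilde\nu-2C)\le g(\nu)\le g(\tilde\nu+2C)$ forces $\tilde\nu-2C\le\nu\le\tilde\nu+2C$ (the lower inequality being trivial when $\tilde\nu-2C\le 0<\nu$), i.e. $|\nu-\tilde\nu|\le 2C$.

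I expect the only genuinely delicate point to be the mismatch between the strict infimum $\inf_{0\le s<t}$ defining $y_t$ in \eqref{cusum} and the closed infimum $\inf_{0\le s\le t}$ defining $\tilde y_t$ in \eqref{dcusum}: the piecewise-constant path of $\tilde u$ can attain new minima exactly at communication instants, so one must invoke continuity of $u$ to identify its two infima and then let the uniform $C$-bound absorb the remaining discrepancy. Everything else is a direct consequence of the $2C$ estimate together with the strict monotonicity of $g$.
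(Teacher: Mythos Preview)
Your proposal is correct and follows essentially the same line as the paper's proof: both establish the uniform bound $|u_t-\tilde u_t|\le C$ componentwise from the exit-time definition, propagate it to $|y_t-\tilde y_t|\le 2C$ via the running infima, read off (i), and then deduce (ii) from the strict monotonicity of $x\mapsto e^x-x-1$ together with the calibration \eqref{lop}. Your remark on the $\inf_{s<t}$ versus $\inf_{s\le t}$ discrepancy is a valid nicety that the paper simply elides.
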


\begin{proof}
For any $t>0$, from (\ref{taucd}) and (\ref{freecd}) it is clear that  for every $1 \leq k \leq K$
$$|u_{t}^{k}- \tilde{u}_{t}^{k}| \leq \max\{\oDi, \uDi\}\leq \D_{\max}.$$  
Then, summing over $k$ we obtain  $|u_{t}- \tilde{u}_{t}| \leq K \D_{\max}= C$ and, consequently, $|m_{t}- \tilde{m}_{t}| \leq C$, where
$m_{t}:=\inf_{ 0 \leq s \leq t} u_{s}$ and $\tilde{m}_{t}:=\inf_{ 0 \leq s \leq t} \tilde{u}_{s}$. 
Therefore, from the definition of $y_{t}$ and $\tilde{y}_t$ we have
$$|y_{t}-\tilde{y}_{t}|\leq |u_{t}-\tilde{u}_{t}|+ |m_{t}-\tilde{m}_{t}| \leq 2C,$$
which implies (i).  
From (i) and the fact that $\langle u \rangle$ is an increasing process we have
$$
\Exp_{\infty}[\langle u\rangle_{{\cal{S}}_{\tilde{\nu}-2 C}}] \leq \Exp_{\infty}[\langle u\rangle_{\dcus_{\tilde{\nu}}}] \leq \Exp_{\infty}[\langle u\rangle_{{\cal{S}}_{\tilde{\nu}+2 C}}].
$$
From the last inequality and (\ref{lop}) we obtain 
$$\Exp_{\infty}[\langle u\rangle_{{\cal{S}}_{\tilde{\nu}-2 C}}] \leq  \Exp_{\infty}[\langle u\rangle_{\cus_\nu}] \leq \Exp_{\infty}[\langle u\rangle_{{\cal{S}}_{\tilde{\nu}+2 C}}].$$
Let us now recall (\ref{fapito}) and define the function 
$$\psi(x)  := \Exp_{\infty}[-u_{{\cal{S}}_{x}}]=\Exp_{\infty}[\langle u\rangle_{{\cal{S}}_{x}}] = e^{x}-x-1, \; x>0.$$ 
Then, the last pair of inequalities takes the form $\psi(\tilde{\nu}-2 C) \leq \psi(\nu) \leq  \psi(\tilde{\nu}+2 C)$ and (ii) then 
follows from the fact that $\psi$ is strictly increasing.
\end{proof}

\begin{proof}[Proof of Theorem\,\ref{prop1}]
The proof is a direct consequence of Lemma \ref{lem}(i) and (\ref{fapito}). Indeed, 
\begin{align*}
\cJ[\dcus_{\tilde{\nu}}]- \cJ[\cus_\nu] &\leq \cJ[\cS_{\tilde{\nu}+ 2 C}]- \cJ[\cus_\nu]  
     =  ( e^{-\tilde{\nu} - 2 C} -e^{-\nu})  + (\tilde{\nu} -\nu) +2 C \leq  4 C,
\end{align*}
where the first inequality follows from the nonnegativity of KL-divergences and the fact that $\cS_{\tilde{\nu}+ 2 C} \geq \cus_\nu$,  
the equality is due to the second relationship in (\ref{fapito}) and the second inequality due to the fact that $|\nu-\tilde{\nu}| \leq 2 C$.
\end{proof}

\section{}\label{app:B} 

\begin{proof}[Proof of Lemma \ref{lem:6}]
Let us first define for any $r\ge0$ the stopping times 
$$T_r^+=\inf\{t>0:u_t\geq r\},\quad T_r^-=\inf\{t>0:-u_t\geq r\}.$$
Due to the representation of the CUSUM stopping time as a repeated SPRT with thresholds 0 and $\nu$, we have the following well-known formula (see for example Siegmund, \cite[Page 25]{seigmund}) for its expectation under $\Pro_{0}$ and $\Pro_{\infty}$
 \begin{equation}
\Exp_i[u_{\cS}]=\frac{\Exp_i[u_{\cT}]}{\Pro_i(u_{\cT}\geq\nu)},~i=0,\infty,
\label{eq:APP1}
\end{equation}
where $\cT=\min\{T_0^-,T_\nu^+\}$ is the SPRT stopping time with boundaries 0 and $\nu$. Using \eqref{eq:APP1} for $i=0$, we can now write
\begin{align} \label{eq:APP-10}
\begin{split}
\Exp_0[u_{\cS}]  &= \frac{\Exp_0[u_{\cT}\ind{u_{\cT}\ge\nu}] + \Exp_0[u_{\cT} \ind{u_{\cT}\leq0}]}{\Pro_0(u_{\cT}\geq\nu)} \\
&\geq \nu - \frac{\Exp_0[(-u_{\cT})\ind{u_{\cT}\leq0}]}{\Pro_0(u_{\cT}\geq\nu)}. 
\end{split}
\end{align}
We start with the numerator and with a change of measure we have
\begin{equation}  \label{eq:APP-22}
\Exp_0[-u_{\cT} \ind{u_{\cT}\leq0}] = \Exp_\infty[e^{u_{\cT}} (-u_{\cT}) \ind{u_{\cT}\leq0}] \leq  \Exp_\infty[-u_{\cT} \ind{u_{\cT}\leq0}].
\end{equation}
We can now strengthen this inequality as follows: 
\begin{align} 
\begin{split}
\Exp_\infty[-u_{\cT} \ind{u_{\cT}\leq0}] &= \Exp_\infty[ -u_{\cT_{0}^{-}} \ind{\cT_{0}^{-} \leq \cT_{\nu}^{+}}] \leq \Exp_\infty[-u_{\cT_{0}^{-}}] \\
&\leq  \sup_{r \geq 0} \Exp_\infty[-u_{\cT_{r}^{-}}-r] \leq  \frac{\Exp_\infty[(u_1)^2]}{\Exp_\infty[-u_1]} \\
&\leq \frac{\sum_{k=1}^K\Exp_\infty[(u_1^k-I_\infty^k)^2]+(\sum_{k=1}^KI_\infty^k)^2}{\sum_{k=1}^KI^k_\infty} \\
&=  \frac{\bar{\sigma}_\infty^2}{\bar{I}_\infty} + K\bar{I}_\infty, 
\end{split}
\label{eq:APP-20}
\end{align}
where $\bar{I}_i=\frac{1}{K}\sum_{k=1}^K I_i^k$ is the average, over all sensors, of the Kullback-Leibler information numbers and 
$\bar{\sigma}_i^2:=\frac{1}{K}\sum_{k=1}^K\text{Var}_{i}\{u_1^k\}$ the average, over all sensors, of the variances of the local likelihood ratios $u_1^k$, under the probability measure $\Pro_i,~i=0,\infty$. The second inequality in the second  line in \eqref{eq:APP-20}  follows from Lorden's \cite{lorden} upper bound for the average overshoot, strengthened by observing that $(u_1^-)^2\leq(u_1)^2$. 

Furthermore, for the denominator in (\ref{eq:APP-10}) we have
\begin{align}
\begin{split}
\Pro_0(u_{\cT}\geq\nu) &=\Pro_0(T^+_\nu<T_0^-) \geq\Pro_0(T_0^-=\infty)=\frac{1}{\Exp_0[T_{0}^+]}=\frac{K\bar{I}_0}{\Exp_0[u_{T_{0}^+}]} \\
&\geq\frac{K\bar{I}_0}{\sup_{r\geq0}\Exp_0[u_{T^+_r}-r]} \geq\frac{(K\bar{I}_0)^2}{K\bar{\sigma}_0^2+(K\bar{I}_0)^2}=\frac{\bar{I}_0^2}{K^{-1}\bar{\sigma}_0^2+\bar{I}_0^2}\\
&\geq\frac{\bar{I}_0^2}{\bar{\sigma}_0^2+\bar{I}_0^2}.
\end{split}
\label{eq:APP-40}
\end{align}
The second equality in the first line is a classical result of random walk theory (see for example Siegmund \cite[Corollary~8.39, Page 173]{seigmund}), 
whereas the third equality in the first line is an application of Wald's  identity. The  second inequality in the second line is again the upper bound provided by Lorden \cite{lorden} for the overshoot, while the last inequality is true because $K\geq1$.

From (\ref{eq:APP-22}), (\ref{eq:APP-20}) and (\ref{eq:APP-40})  we obtain 
$$
\frac{\Exp_0[(-u_{\cT})\ind{u_{\cT}\leq0}]}{\Pro_0(u_{\cT}\geq\nu)} \leq \frac{\bar{\sigma}_\infty^2 + K(\bar{I}_\infty)^{2}}{\bar{I}_\infty} \, 
\frac{\bar{\sigma}_0^2+\bar{I}_0^2}{\bar{I}_0^2} =  \Theta(K)
$$
and consequently  from \eqref{eq:APP-10} it follows that $\Exp_0[u_{\cS}]  \geq \nu-\Theta(K)$. It remains to find a  lower bound for $\gamma$ in terms of $\nu$.  From the false alarm constraint and \eqref{eq:APP1} we have
\begin{equation} 
\gamma=\Exp_\infty[-u_{\cS}]=\frac{\Exp_\infty[-u_{\cT}]}{\Pro_\infty(u_{\cT}\geq\nu)}.
\label{eq:APP-2}
\end{equation}
For the expectation in the numerator, we can obtain the following upper bound
\begin{align}
\Exp_\infty[-u_{\cT}]&=\Exp_\infty[-u_{\cT} \ind{u_{\cT}\leq0}]+\Exp_\infty[-u_{\cT} \ind{u_{\cT}\geq\nu}] \nonumber \\
&\leq \Exp_\infty[-u_{\cT} \ind{u_{\cT}\leq0}] \leq \frac{\bar{\sigma}_\infty^2}{\bar{I}_\infty}+K\bar{I}_\infty,
\label{eq:APP-2.1}
\end{align}
where the final inequality follows from (\ref{eq:APP-20}). In order to obtain a lower bound for the probability $\Pro_\infty(u_{\cT}\geq\nu)$ in the denominator 
we start with a change of measure, thus
\begin{equation} \label{lbo}
\Pro_\infty(u_{\cT}\geq\nu)=\Exp_0[e^{-u_{\cT}}\ind{u_{\cT}\geq\nu}]=\Exp_0[e^{-u_{\cT}}|u_{\cT}\geq\nu]\Pro_0(u_{\cT}\geq\nu).
\end{equation}
Then, with an application of the conditional Jensen inequality we have
\begin{align} \label{lb}
\begin{split}
\Exp_0[e^{-u_{\cT}}|u_{\cT}\geq\nu] &\geq \exp(-\Exp_0[u_{\cT}|u_{\cT}\geq\nu]) \\
&\geq \exp\left(-\nu-\frac{\Exp_0[(u_{\cT}-\nu)\ind{u_{\cT}\geq\nu}]}{\Pro_0(u_{\cT}\geq\nu)}\right) \\
&\geq \exp\left(-\nu-\frac{\sup_{r\ge0}\Exp_0[u_{T^+_r}-r]}{\Pro_0(u_{\cT}\geq\nu)}\right) \\
&\geq \exp\left(-\nu-\frac{\frac{\bar{\sigma}_0^2}{\bar{I}_0}+K\bar{I}_0}{\Pro_0(u_{\cT}\geq\nu)}\right).
\end{split}
\end{align}
where in the last inequality we have used, again, Lorden's \cite{lorden} upper bound for the maximal average overshoot. Combining (\ref{lbo}) and (\ref{lb}) we obtain
\begin{align}
\begin{split}
\Pro_\infty(u_{\cT}\geq\nu) &\geq \exp\left(-\nu-\frac{\frac{\bar{\sigma}_0^2}{\bar{I}_0}+K\bar{I}_0}{\Pro_0(u_{\cT}\geq\nu)}\right) \, \Pro_0(u_{\cT}\geq\nu) \\
                             &\geq \exp\left(-\nu-\frac{\frac{\bar{\sigma}_0^2}{\bar{I}_0}+K\bar{I}_0}{\frac{\bar{I}_0^2}{\bar{\sigma}_0^2+\bar{I}_0^2}}\right) \, \frac{\bar{I}_0^2}{\bar{\sigma}_0^2+\bar{I}_0^2},
                             \end{split}
                             \label{eq:APP-2.9}
                             \end{align}
where the second inequality follows from \eqref{eq:APP-40}. Then, from (\ref{eq:APP-2}), (\ref{eq:APP-2.1}) and (\ref{eq:APP-2.9}) we have
$$\gamma \leq \Bigl(\frac{\bar{\sigma}_\infty^2}{\bar{I}_\infty}+K\bar{I}_\infty \Bigr)\,  \exp\left(\nu+\frac{\frac{\bar{\sigma}_0^2}{\bar{I}_0}+K\bar{I}_0}{\frac{\bar{I}_0^2}{\bar{\sigma}_0^2+\bar{I}_0^2}}\right) \, \Bigl(\frac{\bar{I}_0^2}{\bar{\sigma}_0^2+\bar{I}_0^2} \Bigr)^{-1}.$$
Taking logarithms we obtain $\log \gamma \leq \Theta(\log K) + \nu + K \Theta(1)$, which implies that $\log \gamma \leq \nu + \Theta(K)$ and completes the proof.
\end{proof}

\section{}\label{app:C} 
Our goal in this  Appendix is to prove Lemma\,\ref{lem3}, which connects the threshold  $\tilde{\nu}$ to the false-alarm period, $\gamma$. 
In order to provide an elegant proof of this result, we need to adopt an alternative representation of the fusion center policy (that we will use only in this Appendix). Indeed, since the implementation of $\dcus$ requires  only the knowledge of the transmitted messages at the fusion center, 
it is possible  to describe the fusion rule without any reference to the communication times $\{\tau_n^k\}$. Thus, let $z_{n}$ be the $n$th message that arrives at the fusion center and $k_{n}$ the corresponding identity of the sensor which transmitted this message. Of course, since time is discrete, there is non-zero probability that the fusion center may receive messages from two or more sensors concurrently. In this case, we enumerate the simultaneous messages in an arbitrary order and we keep the same order for the labels.

We can then describe the flow of information at the fusion center by the filtration $\{ {\cal{C}}_{n}\}_{n \in \mathbb{N}}$, 
where $\cC_{n}= \sigma ( (z_{1},k_{1}) \ldots, (z_{n},k_{n}))$. For any $n \in \mathbb{N}$ we set 
\begin{align}
\begin{split}
\phi_{n} &:= \log \frac{\Pro_{0}(k_{1}, \ldots, k_{n})}{\Pro_{\infty}(k_{1}, \ldots, k_{n})}\\
 v_{n}  &:= \log \frac{\Pro_{0}(z_{1}, \ldots, z_{n}|k_{1}, \ldots, k_{n})}{\Pro_{\infty}(z_{1}, \ldots, z_{n}|k_{1}, \ldots, k_{n})}.
 \end{split}
 \label{les}
\end{align}
and recalling the definition of the log-likelihood ratios $\oli_{j},\uli_{j}$ in (\ref{lambdas}),  we have
\begin{equation}
v_{n} = \sum_{m=1}^{n} \sum_{j=1}^{d^{k_{m}}} \Bigl[ \overline{\Lambda}_{j}^{k_{m}} \, \ind{z_{m}=j} - \underline{\Lambda}_{j}^{k_{m}} \,  \, \ind{z_{m}=-j} \Bigr].
\end{equation}
Then, the \textit{number of messages} which the fusion center has received 
until an alarm is raised by D-CUSUM is given by the following $\{{\cal{C}}_{n}\}$-stopping time:
\begin{equation} \label{alterfcp}
\tilde{\cN} = \inf \{ n \in \mathbb{N}: v_{n}- \min_{m=1, \ldots, n} v_{m} \geq \tilde{\nu}  \}.
\end{equation}
The process $\{v_n\}$ and the  stopping time $\tilde{\cN}$ are closely related to  $\{\tilde{u}_t\}$ and $\dcus$, respectively. 
Their main difference is that  $\{\tilde{u}_t\}$ and $\dcus$ are expressed in terms of ``physical time'', whereas $\{v_n\}$ and  $\tilde{\cN}$ in terms of number of messages transmitted to the fusion center. If we denote by $\tau_n$ 
the time-instant at which the $n$th message arrives at the fusion center, then we can explicitly specify the following connection between these quantities: $\tilde{u}_{\tau_n}=v_n$ and $\dcus=\tau_{\tilde{\cN}}$. In other words $\tilde{\cN}$ \textit{denotes the number of received messages at the fusion center until stopping at time} $\tilde{\cS}$.

After these definitions, we can now prove Lemma \,\ref{lem3}, which connects $\tilde{\nu}$ to $\gamma$  through an inequality that will be  important for the performance analysis of $\dcus$. For that, recall the definition of $\bar{I}_\infty$ in \eqref{klaver}.

\begin{proof}[Proof of Lemma \ref{lem3}]
We first observe that
\begin{equation} \label{jjj}
\gamma =\Exp_{\infty}[-u_{\dcus}]= K \bar{I}_{\infty}  \, \Exp_{\infty} [\dcus] \geq \bar{I}_{\infty}  \, \Exp_{\infty} [\tilde{\cN}].
\end{equation}
The second equality follows from an application of Wald's identity, whereas the inequality from the fact that $\tilde{\cN}\le K\tilde{\cS}$. Indeed, the maximum number of received messages until stopping at $\tilde{\cS}$ is obtained when at every time instant we have all sensors transmitting a message to the fusion center and this yields $K\tilde{\cS}$. 

From (\ref{jjj}) it is clear that it suffices to prove $\Exp_\infty[\tilde{\cN}] \geq e^{\tilde{\nu}}$. In order to do so, let us define the sequence $\{n_j\}$ of epochs where the CUSUM process $v_{n}- \min_{0\le m\le n} v_{m}$ either returns to zero (restarts) or exceeds $\tilde{\nu}$. This is the classical way to write the CUSUM stopping time as a sum of a random number of components. Specifically, let us define
\begin{align} \label{repeat0}
\begin{split}
n_{j}  &:= \inf \{ n >  n_{j-1} : v_{n} - v_{n_{j-1}} \notin (0, \tilde{\nu})\}\\
\cR &:= \inf\{j \in \mathbb{N}: v_{n_{j}}- v_{n_{j-1}} \geq \tilde{\nu} \}.
\end{split}
\end{align}
Then we clearly have $\tilde{\cN}=n_{\cR}$. Since from one epoch to the next we count at least one additional message, we trivially conclude that $\cR\le\tilde{\cN}$ and, therefore, $\Exp_\infty[\cR]\le\Exp_\infty[\tilde{\cN}]$. We can now claim that it suffices to show that 
\begin{equation} \label{todo}
\Pro_\infty(\cR>j)\ge (1-e^{-\tilde{\nu}})^j, \quad \forall j \in \mathbb{N}.
\end{equation} 
In order to justify this claim, observe first that $\Exp_\infty[\tilde{\cN}]<\infty$, since $\tilde{\cN}$ is a CUSUM stopping time. As a result, 
$\Exp_\infty[\cR]$ is finite as well and consequently (\ref{todo}) implies that 
\begin{equation*}
\Exp_\infty[\tilde{\cN}] \geq \Exp_\infty[\cR] =\sum_{j=0}^\infty\Pro_\infty(\cR>j) \geq \sum_{j=0}^\infty  (1-e^{-\tilde{\nu}})^j \geq e^{\tilde{\nu}}.
\end{equation*}
In order to prove (\ref{todo}), we start with the following observation:
\begin{align}
\begin{split}
\Pro_\infty(\cR>j)&=\Pro_\infty(\cR>j-1;v_{n_j}-v_{n_{j-1}}\le0)\\
&=\Pro_\infty(\cR>j-1)-\Pro_\infty(\cR>j-1;v_{n_j}-v_{n_{j-1}}\ge\tilde{\nu}).
\end{split}
\label{eq:mbifla2}
\end{align}
Let us now set $A:= \{\cR> j-1  \, , \, v_{n_j}-v_{n_{j-1}}\ge\tilde{\nu}\}$. Then, it is clear  that $A \in \cC_{ n_{j}}$ and
with a change of measure $\Pro_{\infty} \mapsto \Pro_{0}$ we obtain
\begin{align} \label{teno}
\Pro_{\infty}(A) &= \int_{A}  \ccL_{n_{j}}^{-1}  \; d\Pro_{0}, \quad \text{where} \quad  \ccL_{n}:= e^{\phi_{n}+v_{n}}, \quad \forall \; n \in \mathbb{N}.
\end{align}
We now argue as follows
\begin{align} \label{ten}
\begin{split}
\Pro_{\infty}(A) &= \int_{A}  \ccL_{n_{j-1}}^{-1}  \,  e^{-( \phi_{n_{j}}- \phi_{n_{j-1}}) - (v_{n_{j}}-v_{n_{j-1}})}     \; d\Pro_{0} \\
 &\leq e^{-\tilde{\nu}} \, \int_{A}  \ccL_{n_{j-1}}^{-1}  \,  e^{-( \phi_{n_{j}}- \phi_{n_{j-1}})}     \; d\Pro_{0} \\
 &\leq e^{-\tilde{\nu}} \, \int_{\cR > j-1}  \ccL_{n_{j-1}}^{-1}  \,  e^{-( \phi_{n_{j}}- \phi_{n_{j-1}})}     \; d\Pro_{0} \\
 &=  e^{-\tilde{\nu}} \, \int_{\cR > j-1}  \ccL_{n_{j-1}}^{-1}   \, \Exp_{0}[  e^{-( \phi_{n_{j}}- \phi_{n_{j}-1})}  | \mathcal{C}_{n_{j-1}} ] \;   d\Pro_{0}.
 \end{split}
 \end{align}
The first inequality is due to the fact that $v_{n_j}-v_{n_{j-1}}\ge\tilde{\nu}$ on the event $A$. 
The second inequality holds because $A \subset \{\cR > j-1\}$, whereas the last equality follows from 
the law of iterated expectation and the fact that $\{\cR > j-1\} \in \cC_{n_{j-1}}$ and $\ccL_{n_{j-1}}^{-1}$ is a $\cC_{n_{j-1}}$-measurable random variable. 

As a likelihood ratio process,   $\{e^{-\phi_{n}}\}_{n \in \mathbb{N}}$ is a positive $(\Pro_{0}, \cC_{n})$-martingale and, consequently supermartingale. As a result, we can apply the  Optional Sampling Theorem and obtain  
\begin{equation} \label{os}
\Exp_{0}[  e^{-( \phi_{n_{j}}- \phi_{n_{j}-1})}  \, | \,  \mathcal{C}_{n_{j-1}} ] \leq 1.
\end{equation}
 Then, it is clear with a change of measure $\Pro_{\infty} \mapsto \Pro_{0}$ that (\ref{ten}) reduces to 
\begin{align} \label{eleven}
\begin{split}
\Pro_{\infty}(A) &\leq  e^{-\tilde{\nu}} \, \int_{\cR > j-1}  \ccL_{n_{j-1}}^{-1} \;   d\Pro_{0} = e^{-\tilde{\nu}} \, \Pro_{\infty}( \cR > j-1) .
 \end{split}
 \end{align}
Substituting the outcome of \eqref{eleven} in \eqref{eq:mbifla2} and applying it repeatedly yields 
\begin{equation*}
\Pro_\infty(\cR>j)\ge(1-e^{-\tilde{\nu}}) \, \Pro_\infty(\cR>j-1)\ge(1-e^{-\tilde{\nu}})^j,
\end{equation*}
which completes the proof. 

\end{proof}

\section{}\label{app:D} 

\begin{proof}[Proof of Lemma\,\ref{lem:0}]
From the definition of $\oli_{j}$ in \eqref{lambdas} and a  change of measure $\Pro_{\infty} \mapsto \Pro_{0}$ we have
\begin{align*}
e^{-\oli_{j}} &= \frac{\Pro_{\infty}( z_{1}^k=j)}{\Pro_{0}( z_{1}^k=j)} = e^{-\oDi_{j}} \, \frac{\Exp_{0}[ e^{-(\ell^k_{1}- \oDi_{j})} \ind{z_{1}^k=j}]}{\Pro_{0}( z_{1}^k=j)} =  e^{-\oDi_{j}} \, \Exp_{0}[ e^{-(\ell^k_{1}- \oDi_{j})} \, | \,   z_{1}^k=j].
\end{align*}
Taking logarithms we obtain the first equality in (\ref{import}), whereas the second one can be shown in a similar way. It is clear that
$\oRi_{j}, \uRi_{j}>0$ for every $1 \leq j  \leq d$ and that $\oRi_{j}, \uRi_{j} \leq \epsilon^{k}$ for every $1 \leq j  \leq d-1$,
thus, it remains to prove (\ref{import2}). We will prove only the first relationship in it, as the second one can be shown in a similar way.

From the conditional Jensen inequality  we obtain
\begin{align} \label{qqq}
\oRi_{d} &\leq \Exp_{0}[ \ell^k_{1}- \oDi_{d} \, | \, z_{1}^k=d ]  = \frac{\Exp_{0}[ (\ell^k_{1}- \oDi_{d}) \, \ind{z_{1}^k=d}]}{\Pro_{0}(z_{1}^k=d)}
\end{align}
and from (\ref{eq:levels}) we have 
\begin{equation} \label{ov111}
\Pro_{0}(z_{1}^k=d)= \Pro_{0}(z_{1}^k=d|z_{1}^{k}>0) \,  \Pro_{0}(z_{1}^{k}>0) = \frac{\Pro_{0}(z_{1}^{k}>0)}{d} = \frac{1-o(1)}{d},
\end{equation}
where $o(1)$ is a term that vanishes as $\oDi, \uDi \rightarrow \infty$ and does not depend on $d$.  

Moreover, since  $\oDi_{d}= \oDi +\oei_{d-1}$ we have 
\begin{align*} 
\Exp_{0}[ (\ell^k_{1}- \oDi_{d}) \, \ind{z_{1}^k=d}] &= \int_{\oei_{d-1}}^{\oei_{d}} \Pro_0(\ell_{1}^{k}>\oDi+x) \, dx  \\
&\leq \int_{\oei_{d-1}}^{\oei_{d}} \Pro_0(\ell_{1}^{k}>\oDi+x | \ell_{1}^{k} \geq \oDi ) \, dx,
\end{align*}
Setting  $D:=\Exp_0[((u_1^k)^+)^2]/I_{0}^{k}$,  which is clearly a finite quantity since $\Exp_0[(u_1^k)^2]<\infty$, 
(recall also that $I_{0}^{k}= \Exp_0[u_1^k]$), we can apply \cite[Theorem\,4, Eq. (13)]{lorden} and obtain the following upper bound for the probability inside the integral:
\begin{align*} 
\Pro_0(\ell_{1}^{k}-\oDi>x | \ell_{1}^{k} \geq \oDi ) &\leq\frac{1}{I_0^k}\left(\frac{\oDi+D}{\oDi+x}\right)\Exp_0[(2u_1^k-x)\ind{u_1^k\geq x}] \\
&\leq \Theta(1) \, \Exp_0[u_1^k\ind{u_1^k\geq x}] ,
\end{align*}
where $\Theta(1)$ is a term that is independent of $d$ and is bounded from above and below as $\oDi, \uDi \rightarrow \infty$. 
Then, applying Fubini's theorem we obtain 
\begin{align} \label{ov222}
\begin{split}
\Exp_{0}[ (\ell^k_{1}- \oDi_{d}) \, \ind{z_{1}^k=d}]  &\leq \Theta(1) \,   \int_{\oei_{d}-1}^{\oei_{d}} \Exp_0[u_1^k\ind{u_1^k\geq x}]dx \\
&= \Theta(1)  \, \Exp_0[u_1^k(u_1^k-\oei_{d-1})^+]  \leq \Theta(1) \,  \Exp_0[(u_1^k)^2\ind{u_1^k>\oei_{d-1}}].
\end{split}
\end{align}
Combining (\ref{qqq}), (\ref{ov111}) and (\ref{ov222}) completes the proof.
\end{proof}

\begin{proof} [Proof of Lemma\,\ref{lem:1}]
From  (\ref{ells}) and (\ref{import}) we have  
\begin{align} \label{error}
\begin{split}
\ell^k_1- \tilde{\ell}^k_1 
                  &= \sum_{j=1}^{d}\Bigl[ (\ell^k_1-\oDi_{j} -\oRi_j) \ind{z_{1}^{k}= j} + (\ell^k_1+ \uDi_{j} +\uRi_j) \ind{z_{1}^{k}=-j}\Bigr] \\
                  &\leq \sum_{j=1}^{d}\Bigl[ (\ell^k_1-\oDi_{j}) \ind{z_{1}^{k}= j} + \uRi_j \ind{z_{1}^{k}=-j}\Bigr] \\
                  &\leq  \sum_{j=1}^{d-1} \Bigl[  \epsilon^{k} \ind{z_{1}^{k}= j} + \epsilon^{k} \ind{z_{1}^{k}=-j}\Bigr] 
                  + (\ell^k_1-\oDi_{d}) \ind{z_{1}^{k}= d^} + \uRi_{d} \ind{z_{1}^{k}=-d}.                                                                  \end{split}                  
\end{align}
where the  first inequality holds because $\oRi_{j}>0$  and $\ell^k_n+ \uDi_{j}<0$ on $\{z_{n}^{k}=-j\}$ and the second one because $\ell_{n}^{k} -\oDi_{j}  \leq \epsilon^{k}$ on $\{z_{n}^{k}=j\}$ and $\uRi_{j} \leq \epsilon^{k}$  for every $1 \leq j  \leq d-1$.

From (\ref{eq:levels}) it follows  that for any $1 \leq j \leq d$ 
\begin{align*} 
\Pro_{0}(z_{1}^{k}= j) &\leq \Pro_{0}(z_{1}^{k}= j| z_{1}^{k}>0)= 1/d ,  \\
\Pro_{0}(z_{1}^{k}=-j) &= \Exp_{\infty}[e^{\ell_{1}^{k}} \ind{z_{1}^{k}=-j}] \leq \Pro_{\infty}(z_{1}^{k}=-j) \leq \Pro_{\infty}(z_{1}^{k}=-j|  z_{1}^{k}<0 )= 1/d,
\end{align*}
therefore, taking expectations in (\ref{error}) we obtain
$$ 
\Exp_{0}[\ell^k_1- \tilde{\ell}^k_1]  \leq 2\epsilon^{k} \frac{d-1}{d} +  \Exp_{0}[ (\ell^k_1-\oDi_{d}) \ind{z_{1}^{k}= d}] + \frac{\uRi_{d}}{d}.
$$ 
Using now (\ref{ov222})and (\ref{import2}), we obtain upper bounds for the second and third term of right-hand side, respectively, which lead to (\ref{thetak}). This expression implies that $\theta^{k} \rightarrow 0$ as $d \rightarrow \infty$, since $\epsilon^{k} \rightarrow 0$ as $d \rightarrow \infty$ and $u_{1}^{k}$ has a finite second moment. 
\end{proof}

\section{}\label{app:E} 
In this Appendix, we state and prove Lemma  \ref{lem:8},  which is used in the proof of Theorem \,\ref{th:2}. In order to do so, 
we  need a very useful for our purposes, asynchronous version of Wald's identity (Lemma\,\ref{lem:4}), as well as  the following lemma.
We  set:
$$\Lambda_{\max} :=\max_{1 \leq k \leq K} \max_{1 \leq j \leq d} \max \{\oli_{j}, \uli_{j}\}.$$

\begin{lemma}\label{lem:7}
If $\Exp_{i}[(u_{1}^{k})^{2}]<\infty$ for every $1 \leq k \leq K$, then  as $\D \rightarrow \infty$ we have
$\Lambda_{\max} =\Theta(\D)$ and $$\min_{1 \leq k \leq K} \Exp_{0}[\tilde{\ell}_{1}^{k}] \geq \Theta(\D).$$
\end{lemma}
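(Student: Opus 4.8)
The plan is to prove the two assertions in turn: first $\Lambda_{\max}=\Theta(\D)$, which feeds directly into the second, $\min_k \Exp_0[\tilde{\ell}_1^k]\ge\Theta(\D)$. The whole argument rests on the decomposition of the log-likelihood ratios supplied by Lemma\,\ref{lem:0} together with a short martingale estimate for the probability of a ``wrong'' exit under $\Pro_0$.

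For $\Lambda_{\max}=\Theta(\D)$ the lower bound is immediate: by Lemma\,\ref{lem:0}, $\oli_1=\oDi+\oei_0+\oRi_1\ge\oDi$ (since $\oei_0=0$ and $\oRi_1>0$), so $\Lambda_{\max}\ge\oDi=\Theta(\D)$. For the matching upper bound I would write $\oli_j=\oDi+\oei_{j-1}+\oRi_j$ and bound the last two terms uniformly in $j$ as $\D\to\infty$. For $j\le d-1$, Lemma\,\ref{lem:0} gives $\oRi_j\le\epsilon^{k}=\calo(1)$ and $\oei_{j-1}\le\oei_{d-1}$, the latter staying bounded because the conditional overshoot distribution, whose $(j/d)$-quantiles are exactly the $\oei_j$ by \eqref{eq:levels}, converges to a proper limit as $\D\to\infty$. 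For $j=d$ the only genuinely non-trivial term is $\oRi_d$, which is controlled by \eqref{import2}, namely $\oRi_d\le\Theta(1)\,d\,\Exp_0[(u_1^k)^2\ind{u_1^k\ge\oei_{d-1}}]<\infty$ by the second-moment hypothesis and the boundedness of $\oei_{d-1}$. Hence every $\oli_j$, and symmetrically every $\uli_j$, equals $\Theta(\D)+\calo(1)=\Theta(\D)$; since $K$ and $d$ are fixed, the maximum over the finitely many indices is again $\Theta(\D)$.

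For the second claim I would use that, by \eqref{ells}--\eqref{lambdas}, $\tilde{\ell}_1^k$ is the log-likelihood ratio of $z_1^k$, so
\[
\Exp_0[\tilde{\ell}_1^k]=\sum_{j=1}^d\Pro_0(z_1^k=j)\,\oli_j-\sum_{j=1}^d\Pro_0(z_1^k=-j)\,\uli_j .
\]
Bounding $\oli_j\ge\oDi$ in the first sum and $\uli_j\le\Lambda_{\max}$ in the second gives $\Exp_0[\tilde{\ell}_1^k]\ge\oDi\,\Pro_0(\ell_1^k\ge\oDi)-\Lambda_{\max}\,\Pro_0(\ell_1^k\le-\uDi)$, using that $\{z_1^k>0\}=\{\ell_1^k\ge\oDi\}$ and $\{z_1^k<0\}=\{\ell_1^k\le-\uDi\}$ partition the sure exit event. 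The crucial remaining ingredient is that exiting on the wrong side under $\Pro_0$ is exponentially unlikely: since $\{e^{-u_t^k}\}$ is a nonnegative $\Pro_0$-martingale (being the likelihood ratio $d\Pro_\infty^k/d\Pro_0^k$), optional stopping at $\tau_1^k$ combined with Fatou's lemma yields $\Exp_0[e^{-\ell_1^k}]\le1$, and restricting to $\{\ell_1^k\le-\uDi\}$, where $e^{-\ell_1^k}\ge e^{\uDi}$, gives $\Pro_0(\ell_1^k\le-\uDi)\le e^{-\uDi}$. As $\uDi=\Theta(\D)$ and $\Lambda_{\max}=\Theta(\D)$ from the first part, the negative contribution is $\Theta(\D)\,e^{-\Theta(\D)}=o(1)$, while $\Pro_0(\ell_1^k\ge\oDi)\ge1-e^{-\uDi}\to1$. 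Therefore $\Exp_0[\tilde{\ell}_1^k]\ge\oDi(1-o(1))-o(1)=\Theta(\D)$ uniformly over the finitely many sensors, which is exactly the claimed lower bound on the minimum.

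The main obstacle is the uniform-in-$\D$ control of the top-alphabet quantities $\oei_{d-1}$ and $\oRi_d$ (and their lower-exit counterparts): these are the only contributions that are not automatically $\calo(1)$, and taming them is precisely where the second-moment hypothesis and the convergence of the conditional overshoot distribution enter, through the estimate \eqref{import2}. By contrast, the martingale/Fatou bound on the wrong-exit probability is clean exactly because $u^k$ is a genuine log-likelihood ratio, so no separate adjustment-coefficient analysis is needed.
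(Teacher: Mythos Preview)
Your proof is correct and follows essentially the same route as the paper. For $\Lambda_{\max}=\Theta(\D)$ both you and the paper invoke the decomposition $\oli_j=\oDi_j+\oRi_j$ from Lemma~\ref{lem:0} together with the $\calo(1)$ control on the remainders; you are simply more explicit than the paper about why $\oei_{d-1}$ stays bounded as $\D\to\infty$ (the paper states ``it is clear that $\oRi_j,\uRi_j=\calo(1)$'' and leaves the $\oei_{j-1}$ term implicit). For the second assertion your argument coincides with the paper's line by line: the same lower bound $\oli_j\ge\oDi$, the same upper bound $\uli_j\le\Lambda_{\max}$, and the same exponential estimate $\Pro_0(\ell_1^k\le-\uDi)\le e^{-\uDi}$, which the paper obtains by the change of measure $\Pro_0\mapsto\Pro_\infty$ and you obtain by optional stopping of the $\Pro_0$-supermartingale $e^{-u_t^k}$---these are two phrasings of the same identity.
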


\begin{proof}
From Lemma \ref{lem:0} it is clear that $\oRi_{j}, \uRi=O(1)$ and, consequently, $\oli_{j}, \uli_{j}=\Theta(\D)$ as $\D \rightarrow \infty$ for every $j=1, \ldots,d$, which proves that  $\Lambda_{\max} =\Theta(\D)$. Furthermore,  since $\oli_{j} \geq \oDi_{j} \geq \oDi$ and $\uli_{j} \leq \Lambda_{\max}$ we have
\begin{align*}
\Exp_{0}[\tilde{\ell}_{1}^{k}] &=  \sum_{j=1}^{d} [ \oli_{j} \, \Pro_{0}(z_{1}^{k}=j)- \uli_{j} \, \Pro_{0}(z_{1}^{k}=-j) ] \\
&\geq  \oDi \, \Pro_{0}(z_{1}^{k}>0) - \Lambda_{\max} \, \Pro_{0}(z_{1}^{k}<0) \\
&= \oDi  - (\oDi+  \Lambda_{\max} ) \, \Pro_{0}(z_{1}^{k}<0),
\end{align*}
thus, it suffices to show that $\Pro_{0}(z_{1}^{k}<0)=o(1/\D)$. Indeed,  with a change of measure we have
$$\oDi \Pro_{0}(z_{1}^{k}<0) = \oDi \, \Exp_{\infty}[e^{\ell_{1}^{k}} \,  \ind{\ell_{1}^{k}<-\uDi}] \leq \oDi \,  e^{-\uDi}$$
and the upper bound clearly goes to 0 as $\D \rightarrow \infty$. 
\end{proof}

\begin{lemma}\label{lem:4}
Consider a generic sequence $\{\zeta_{n}^k\}$, where each $\zeta_{n}^k$ is an arbitrary (Borel) function of the triplet 
$(\tau_{n}^k-\tau_{n-1}^k, z_{n}^k, \ell_{n}^{k})$. Thus,  $\{\zeta_{n}^k\}$ is a sequence of independent and identically distributed random variables under both $\Pro_{0}$ (and $\Pro_{\infty}$).
If  $\cT$ is a $\Pro_{0}$-integrable $\{\cF_t\}$-stopping time and  $\Exp_{0}[|\zeta_{1}^k|]< \infty$, then
\begin{equation} \label{abra1}
\Exp_{0} \left[ \sum_{n=1}^{m_{\cT}^k+1} \zeta_{n}^k \right] = \Exp_{0} [ m_{\cT}^k+1] \Exp_{0}[\zeta_{1}^k].
\end{equation}
If moreover $\zeta_{n}^k\geq 0$, then
\begin{equation} \label{abra2}
\Exp_{0} \left[ \sum_{n=1}^{m_{\cT}^k} \zeta_{n}^k \right] \leq (\Exp_{0} [m_{\cT}^k]+1) \; \Exp_{0}[\zeta_{1}^k].
\end{equation}
Finally, if  $|\zeta_{n}^k|\leq M^{k}$, where $M^{k}$ is some finite constant, then
\begin{equation} \label{abra3}
\Exp_{0} \left[ \sum_{n=1}^{m_{\cT}^k} \zeta_{n}^k \right] \geq \Exp_{0} [m_{\cT}^k] \; \Exp_{0}[\zeta_{1}^k] - 2M^{k}.
\end{equation}
\end{lemma}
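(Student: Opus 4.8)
The plan is to reduce all three assertions to the \emph{ordinary} Wald identity applied to the i.i.d.\ sequence $\{\zeta_n^k\}$ stopped at $N^k := m_{\cT}^k+1$, the index of the \emph{first} communication of sensor $k$ that occurs strictly after $\cT$. The only genuine work is to verify that $N^k$ is a bona fide, integrable stopping time for $\{\zeta_n^k\}$, despite the fact that $\cT$ is an asynchronous rule that may depend on all sensors; once this is done, the three displays follow by elementary manipulations.

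For the stopping-time property I would work with the filtration $\mathcal{A}_n^k := \cF_{\tau_n^k}$. First, each $\zeta_n^k$ is $\mathcal{A}_n^k$-measurable, since the triplet $(\tau_n^k-\tau_{n-1}^k, z_n^k, \ell_n^k)$ is $\cF_{\tau_n^k}$-measurable ($\tau_{n-1}^k,\tau_n^k$ are stopping times, $z_n^k$ is $\cF_{\tau_n^k}^k$-measurable, and $\ell_n^k=u^k_{\tau_n^k}-u^k_{\tau_{n-1}^k}$). Next, the key identity is $\{N^k \le n\}=\{\cT<\tau_n^k\}$; since $\cT$ and $\tau_n^k$ are both $\{\cF_t\}$-stopping times, the standard fact that $\{S<T\}\in\cF_{S\wedge T}$ for stopping times $S,T$ gives $\{N^k\le n\}\in \cF_{\cT\wedge\tau_n^k}\subseteq\mathcal{A}_n^k$, so $N^k$ is an $\{\mathcal{A}_n^k\}$-stopping time. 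Finally, I would invoke the renewal structure: by the strong Markov property of the random walk $\{u^k_t\}$ at the $\{\cF_t^k\}$-stopping time $\tau_n^k$, together with the independence across sensors, the post-$\tau_n^k$ triplets $\{(\tau_m^k-\tau_{m-1}^k,z_m^k,\ell_m^k)\}_{m>n}$ are independent of $\mathcal{A}_n^k$ and distributed as the original sequence; hence $\zeta_{n+1}^k,\zeta_{n+2}^k,\dots$ are independent of $\mathcal{A}_n^k$, which is exactly what Wald requires. Integrability of $N^k$ is immediate: in discrete time $\tau_n^k\ge n$, so $m_{\cT}^k\le\cT$, giving $N^k\le\cT+1$ and $\Exp_0[N^k]\le\Exp_0[\cT]+1<\infty$.

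With these facts in hand the ordinary Wald identity yields
\begin{equation*}
\Exp_0\Bigl[\sum_{n=1}^{N^k}\zeta_n^k\Bigr]=\Exp_0[N^k]\,\Exp_0[\zeta_1^k]=\Exp_0[m_{\cT}^k+1]\,\Exp_0[\zeta_1^k],
\end{equation*}
which is exactly \eqref{abra1}. For the remaining two bounds I would write $\sum_{n=1}^{m_{\cT}^k}\zeta_n^k=\sum_{n=1}^{N^k}\zeta_n^k-\zeta_{N^k}^k$. When $\zeta_n^k\ge0$, dropping the nonnegative term $\zeta_{N^k}^k$ gives the upper bound \eqref{abra2}. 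When $|\zeta_n^k|\le M^k$, taking expectations gives $\Exp_0[\sum_{n=1}^{m_{\cT}^k}\zeta_n^k]=\Exp_0[m_{\cT}^k]\Exp_0[\zeta_1^k]+\Exp_0[\zeta_1^k]-\Exp_0[\zeta_{N^k}^k]$, and bounding $|\Exp_0[\zeta_1^k]|,|\Exp_0[\zeta_{N^k}^k]|\le M^k$ produces the slack $2M^k$ in \eqref{abra3}.

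The step I expect to be the main obstacle is the middle one: making rigorous that the asynchronous, global stopping rule $\cT$ interacts with the local renewal times $\tau_n^k$ so that $\{N^k\le n\}$ genuinely lands in $\cF_{\tau_n^k}$ while the future increments stay independent of it. This is the only place where independence across sensors and the i.i.d.\ increment (renewal) structure are actually used, and it is where one must be careful that conditioning on $\mathcal{A}_n^k$ does not leak information about sensor $k$'s future through the other sensors or through $\cT$ itself.
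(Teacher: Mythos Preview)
The paper does not actually prove this lemma in the text; it simply cites \cite{fel} (the companion paper on decentralized sequential hypothesis testing). Your approach is correct and is in fact the standard way to obtain such an ``asynchronous'' Wald identity: embed the global stopping time $\cT$ into sensor $k$'s renewal clock via $N^k=m_{\cT}^k+1$, show that $N^k$ is an integrable $\{\cF_{\tau_n^k}\}$-stopping time, and then read off \eqref{abra1} from the ordinary Wald identity; \eqref{abra2} and \eqref{abra3} follow by subtracting the single boundary term $\zeta_{N^k}^k$, exactly as you do.

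The step you single out as the potential obstacle---that the post-$\tau_n^k$ triplets are independent of the \emph{global} $\sigma$-algebra $\cF_{\tau_n^k}$---does go through, and your instinct to use the strong Markov property together with cross-sensor independence is right. In discrete time one can make it explicit: on $\{\tau_n^k=t\}$ the future triplets of sensor $k$ are a fixed measurable functional of $(\xi_s^k)_{s>t}$, which is independent of $\cF_t$ (i.i.d.\ increments for sensor $k$ plus independence across sensors); since $A\cap\{\tau_n^k=t\}\in\cF_t$ for every $A\in\cF_{\tau_n^k}$, summing over $t$ yields the required independence from $\cF_{\tau_n^k}$. So the concern you raise is legitimate but fully resolvable, and your argument is complete.
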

\begin{proof}
The proof can be found in \cite{fel}.
\end{proof}


\begin{lemma} \label{lem:8}
If $\Exp_{i}[(u_{1}^{k})^{2}]<\infty$ for every $1 \leq k \leq K$, then as $\D \rightarrow \infty$
\begin{align}
\tilde{u}_{\tilde{\cS}} &\leq \log\gamma+ K \Theta(\D) \label{flas1} \\
\Exp_0[u_{\tilde{\cS}}-\tilde{u}_{\tilde{\cS}}] &\leq  K \Theta(\D) +  \frac{\theta}{\Theta(\D)}  \, \log \gamma. \label{flas2}
\end{align}
\end{lemma}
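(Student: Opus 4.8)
The plan is to prove the two claims separately, deriving \eqref{flas1} as a pathwise overshoot bound and then building \eqref{flas2} on top of it. For \eqref{flas1}, I would first note that $\tilde{u}_t$ is piecewise constant and changes only at communication instants, and that the reflected statistic $\tilde{y}_t$ can cross a level only on an upward jump. Since each such jump equals the aggregate $\sum_{k}\tilde{\ell}$ over the sensors communicating at that instant, and each $\tilde{\ell}$ is bounded in absolute value by $\Lambda_{\max}$, the crossing overshoot obeys $\tilde{y}_{\dcus}\le\tilde{\nu}+K\Lambda_{\max}$. Writing $\tilde{u}_{\dcus}=\tilde{y}_{\dcus}+\inf_{0\le s\le\dcus}\tilde{u}_s\le\tilde{y}_{\dcus}$ (because $\tilde{u}_0=0$ forces the infimum to be nonpositive), and then inserting $\Lambda_{\max}=\Theta(\D)$ from Lemma \ref{lem:7} and $\tilde{\nu}=\log\gamma+\Theta(1)$ from Lemma \ref{lem3}, yields $\tilde{u}_{\dcus}\le\log\gamma+K\Theta(\D)$, which is \eqref{flas1}.

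For \eqref{flas2} I would decompose, for each sensor $k$,
$$u_{\dcus}^{k}-\tilde{u}_{\dcus}^{k}=\sum_{n=1}^{m_{\dcus}^{k}}(\ell_n^k-\tilde{\ell}_n^k)+\bigl(u_{\dcus}^{k}-u_{\tau^{k}(\dcus)}^{k}\bigr),$$
where the final term is the incomplete local log-likelihood accrued after the last communication of sensor $k$; since that pending run has not yet exited $(-\uDi,\oDi)$, it is at most $\oDi=\Theta(\D)$, and summing over $k$ contributes $K\Theta(\D)$ to the upper bound. For the quantization-error sum I would invoke the pointwise inequality from the proof of Lemma \ref{lem:1}, namely $\ell_n^k-\tilde{\ell}_n^k\le\eta_n^k$ for a nonnegative, i.i.d.\ sequence $\{\eta_n^k\}$ with $\Exp_0[\eta_1^k]\le 2\theta^k$, and then apply the asynchronous Wald upper bound \eqref{abra2} to get $\Exp_0\bigl[\sum_{n=1}^{m_{\dcus}^k}(\ell_n^k-\tilde{\ell}_n^k)\bigr]\le 2\theta^k\,(\Exp_0[m_{\dcus}^k]+1)$.

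It then remains to control $\Exp_0[m_{\dcus}^k]$. Here I would apply the asynchronous Wald lower bound \eqref{abra3} to the bounded increments $\tilde{\ell}_n^k$ (with $M^k=\Lambda_{\max}$), giving $\Exp_0[\tilde{u}_{\dcus}^k]\ge\Exp_0[m_{\dcus}^k]\,\Exp_0[\tilde{\ell}_1^k]-2\Lambda_{\max}$. Lemma \ref{lem:7} supplies $\Exp_0[\tilde{\ell}_1^k]\ge\Theta(\D)$ and $\Lambda_{\max}=\Theta(\D)$, so summing over $k$ and inserting the expectation version of \eqref{flas1} produces $\sum_k\Exp_0[m_{\dcus}^k]\le\log\gamma/\Theta(\D)+\calo(K)$. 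Combining this with the previous display, using $\theta^k\le\theta$ and absorbing the lower-order terms into $K\Theta(\D)$ (legitimate since $\Theta(\D)\to\infty$ while $\theta=\calo(1)$), gives $\Exp_0[u_{\dcus}-\tilde{u}_{\dcus}]\le K\Theta(\D)+\frac{\theta}{\Theta(\D)}\log\gamma$.

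The main obstacle is the bound on $\Exp_0[m_{\dcus}^k]$, the single step that couples the unknown transmission count to the detection threshold. It requires the two-sided use of the asynchronous Wald identity of Lemma \ref{lem:4}—the upper bound \eqref{abra2} for the nonnegative error process and the lower bound \eqref{abra3} for the bounded increment process—together with the uniform $\Theta(\D)$ control of both $\Exp_0[\tilde{\ell}_1^k]$ and $\Lambda_{\max}$. A secondary technical point to check is the $\Pro_0$-integrability of $\dcus$ required by Lemma \ref{lem:4}, which follows from the strictly positive drift of $\tilde{u}$ under $\Pro_0$.
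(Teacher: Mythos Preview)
Your proposal is correct and follows essentially the same route as the paper: the pathwise overshoot bound $\tilde{u}_{\dcus}\le\tilde{y}_{\dcus}\le\tilde{\nu}+K\Lambda_{\max}$ combined with Lemmas \ref{lem3} and \ref{lem:7} for \eqref{flas1}, and for \eqref{flas2} the per-sensor decomposition into the pending increment plus the quantization-error sum, handled via \eqref{abra2}, with $\sum_k\Exp_0[m_{\dcus}^k]$ controlled through \eqref{abra3} and Lemma \ref{lem:7}. Your explicit introduction of a nonnegative dominating sequence $\eta_n^k$ before invoking \eqref{abra2} is in fact slightly more careful than the paper's wording, since $\ell_n^k-\tilde{\ell}_n^k$ need not itself be nonnegative.
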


\begin{proof}
In order to prove (\ref{flas1}), it  suffices to observe that the overshoot $\tilde{y}_{\tilde{\cS}}-\tilde{\nu}$ cannot be larger than $K \Lambda_{\max}$,
therefore, 
$$
\tilde{u}_{\tilde{\cS}} \leq \tilde{y}_{\tilde{\cS}} \leq  \tilde{\nu}+ K \, \Lambda_{\max}  \leq \log \gamma +  K \Theta(\D) ,
$$
where the last inequality follows from Lemmas \ref{lem3} and \ref{lem:7}. In order to prove (\ref{flas2}), we observe that for any  $t$ and $k$ we have
\begin{align*}
u^k_t-\tilde{u}^k_t &=u^k_t-u^k_{\tau^k(t)}+u^k_{\tau^k(t)}-\tilde{u}^k_{\tau^k(t)} \leq  \oDi  + \sum_{n=1}^{m^k_t} [ \ell_{n}^{k}- \tilde{\ell_{n}^{k}}],
\end{align*}
If we now replace  $t$ with $\tilde{\cS}$, take expectations with respect to $\Pro_0$ and  apply (\ref{abra2}) and  Lemma\,\ref{lem:1} we obtain 
$$
\Exp_0[u^k_{\tilde{\cS}}-\tilde{u}^k_{\tilde{\cS}}] \leq \oDi+ (\Exp_0[m^k_{\tilde{\cS}}]+1) \, \theta^{k}
=  \oDi+ \theta^{k}+ \theta^{k} \Exp_0[m^k_{\tilde{\cS}}].
$$
Since from (\ref{thetak}) it is clear that $\theta^{k}=\calo(1)$ as $\D \rightarrow \infty$,  summing over $k$ we obtain
\begin{align}  \label{sss}
\begin{split}
\Exp_0[u_{\tilde{\cS}}-\tilde{u}_{\tilde{\cS}}] 
&\leq \sum_{k=1}^{K} (\oDi+\theta^{k}) + \sum_{k=1}^{K} \theta^{k} \, \Exp_0[m^{k}_{\tilde{\cS}}] 
\leq  K \Theta(\D) +  \theta \, \Exp_0[m_{\tilde{\cS}}] ,
\end{split}
\end{align}
where $m_{t}:=\sum_{k=1}^{K} m_{t}^{k}$. Now, it is obvious that $|\tilde{\ell}_{n}^{k}| \leq \Lambda_{\max}$ for every $n$ and $k$, therefore applying (\ref{abra3}) we have
$$
\Exp_{0}[\tilde{u}^{k}_{\tilde{\cS}}]= \Exp_{0} \Bigl[ \sum_{n=1}^{m_{\tilde{\cS}}^k} \tilde{\ell}_{n}^{k} \Bigr] \geq \Exp_{0}[m^{k}_{\tilde{\cS}}] \, \Exp_{0}[\tilde{\ell}_{1}^{k}] -2 \Lambda_{\max}.
$$
Thus, summing over $k$ we obtain
\begin{align*}
\Exp_{0}[\tilde{u}_{\tilde{\cS}}] 
& \geq   \Bigl(\min_{1 \leq k \leq K} \Exp_{0}[\tilde{\ell}_{1}^{k}] \Bigr) \, \Exp_{0}[m_{\tilde{\cS}}] - 2 \,  K \, \Lambda_{\max} 
\end{align*}
and, consequently, 
\begin{align*}
\Exp_{0}[m_{\tilde{\cS}}] &\leq \frac{\Exp_{0}[\tilde{u}_{\tilde{\cS}}] + 2 K \, \Lambda_{\max}}{\min_{1 \leq k \leq K} \Exp_{0}[\tilde{\ell}_{1}^{k}]} 
\leq \frac{ \log \gamma + K \Theta(\D)}{\Theta(\D)}=  \frac{ \log \gamma}{\Theta(\D)}+  K \Theta(\D).
\end{align*}
where the second inequality is  due to (\ref{flas1}) and Lemma \ref{lem:7}. Combining the latter relationship with (\ref{sss}) we obtain the desired result. 
\end{proof}

\end{document}